\def\@abssec#1{\vspace{.05in}\footnotesize \parindent .2in
{\bf #1. }\ignorespaces}
\newtheorem{theorem}{Theorem}[section]
\newtheorem{lemma}[theorem]{Lemma}
\newtheorem{remark}[theorem]{Remark}
\newcommand{\EQ}[1]{\eqref{#1}}
\newcounter{hypo}
\newcommand{\R}{\ensuremath{\mathbb{R}}}
\newcommand{\N}{\ensuremath{\mathbb{N}}}
\newcommand{\LL}{\ensuremath{\mathcal{L}}}
\newcommand{\beq}[1]{\begin{equation}\label{#1}}
\newcommand{\eeq}{\end{equation}}
\newcommand{\beqs}{\begin{equation*}}
\newcommand{\eeqs}{\end{equation*}}
\newcommand{\set}[1]{\left\{#1\right\}}
\newcommand{\lp}{\left(}
\newcommand{\rp}{\right)}
\newcommand{\tend}{\bibliographystyle{plain}\bibliography{ccrituniq}\end{document}}
\newcommand{\dd}{\mathrm{d}}
\numberwithin{equation}{section}
\begin{document}
\bigskip

\title[Differentiability issue of drift-diffusion equation]{On the Differentiability issue of the drift-diffusion equation with nonlocal L\'evy-type diffusion}
\author{Liutang Xue}
\address{School of Mathematical Sciences, Beijing Normal University, and Laboratory of Mathematics and Complex Systems, Ministry of Education, Beijing 100875, P.R. China}
\email{xuelt@bnu.edu.cn}
\author{Zhuan Ye}
\address{School of Mathematical Sciences, Beijing Normal University, and Laboratory of Mathematics and Complex Systems, Ministry of Education, Beijing 100875, P.R. China}
\email{yezhuan815@126.com}
\subjclass[2010]{35B65, 35R11, 35K99, 35Q35.}
\keywords{Drift-diffusion equation, differentiability, L\'evy-type operator, fractional Laplacian operator, smoothness.}
\date{}
\maketitle

\begin{abstract}
  We investigate the differentiability issue of the drift-diffusion equation with nonlocal L\'evy-type diffusion at either supercritical or critical type cases.
Under the suitable conditions on the drift velocity and the forcing term in terms of the spatial H\"older regularity,
we prove that the vanishing viscosity solution is differentiable with some H\"older continuous derivatives for any positive time.
\end{abstract}

\section{Introduction}\label{intro}
We consider the following drift-diffusion equation with nonlocal diffusion
\begin{equation}\label{DD}
\begin{cases}
  \partial_{t}\theta+(u \cdot \nabla) \theta+\mathcal{L}\theta=f, &\quad \textrm{in}\;\,\R^d\times \R^+,\\
  \theta(x,0)=\theta_{0}(x), &\quad  \textrm{on}\;\,\R^d,
\end{cases}
\end{equation}
where $\theta$ is a scalar function, $u$ is a velocity vector field of $\R^d$ and $f$ is a scalar function as the forcing term.
The nonlocal diffusion operator $\mathcal{L}$ is given by
\begin{equation}\label{L}
  \mathcal{L}\theta(x)={\rm{p.v.}}\int_{\mathbb{R}^d}{\big(\theta(x)-\theta(x+y)\big)K(y)\,\dd y},
\end{equation}
where the symmetric kernel function $K(y)=K(-y)$ defined on $\mathbb{R}^d\setminus\{0\}$ satisfies that
\begin{equation}\label{Kcd1}
  \int_{\R^d} \min \set{1, |y|^2} |K(y)|\,\dd y \leq c_1,
\end{equation}
and there exist two constants $\alpha\in(0,\,1]$ and $\sigma\in [0,\,\alpha)$ such that
\begin{equation}\label{Kcd2}
  \frac{c_2^{-1}}{|y|^{d+\alpha-\sigma}}\leq K(y)\leq \frac{c_2}{|y|^{d+\alpha}},\quad \forall 0<|y|\leq 1,
\end{equation}
with $c_1>0$ and $c_2\geq 1$ two absolute constants. Besides, in the sequel we also consider the kernel $K$ satisfying the nonnegative condition
\begin{equation}\label{Kcd3}
  K(y)\geq 0,\quad \forall y\in \R^d\setminus\{0\}.
\end{equation}

The nonlocal diffusion operator $\mathcal{L}$ defined by \eqref{L} with the symmetric kernel $K$ satisfying \eqref{Kcd1}-\eqref{Kcd2}
corresponds to the L\'evy-type operator, which is the infinitesimal generator of the stable-type L\'evy process  (cf. \cite{CSZ,Sato}).
By taking the Fourier transform on $\mathcal{L}$, we get
$$\widehat{\mathcal{L}\theta}(\xi)=A(\xi)\widehat{\theta}(\xi),$$
where the symbol $A(\xi)$ is given by the following L$\rm \acute{e}$vy-Khinchin formula
\begin{equation}\label{LKf}
  A(\xi)=\,\textrm{p.v.}\int_{\R^d}\lp 1-\cos(x\cdot\xi)\rp K(x)\dd x.
\end{equation}

The considered operator $\LL$ includes a large class of multiplier operators $\LL=A(D)=A(|D|)$ such as
\begin{equation}\label{Llog}
  \LL= \frac{|D|^\alpha}{\left(\log(\lambda + |D|)\right)^\mu},\;\lp\alpha\in (0,1],\mu\geq 0,\lambda>0\rp
\end{equation}
with $|D|:=(-\Delta)^{\frac{1}{2}}$, and one can refer to \cite[Lemmas 5.1-5.2]{DKSV} for more details on the assumptions of $A(\xi)$
so that the corresponding kernel $K$ satisfies \eqref{Kcd1}-\eqref{Kcd2};
we also note that the condition \eqref{Kcd3} can be satisfied under some additional assumption of $A(\xi)$, e.g., for all $\lambda\geq \lambda_0$ with $\lambda_0>0$ some number,
the operator \eqref{Llog} satisfies \eqref{Kcd1}-\eqref{Kcd3} (cf. \cite{Hmidi,MX15,DL14}).
If $\mu=0$ in \eqref{Llog}, the operator $\LL$ reduces to an important special case $ |D|^\alpha:= (-\Delta)^{\frac{\alpha}{2}}$ ($\alpha\in ]0,1]$),
usually called as the fractional Laplacian operator, which has the following expression formula
\begin{equation}\label{LamAlp}
  |D|^\alpha \theta(x)= c_{d,\alpha}\,\textrm{p.v.}\int_{\R^d} \frac{\theta(x)-\theta(x+y)}{|y|^{d+\alpha}} \dd y,
\end{equation}
with $c_{d,\alpha}>0$ some absolute constant. The operator $\LL =|D|^\alpha$ ($\alpha\in(0,2)$) is the infinitesimal generator of the symmetric stable L\'evy process (cf. \cite{Sato}),
and recently has been intensely considered in many theoretical problems.
For the drift-diffusion equation \EQ{DD} with $\LL=|D|^\alpha$, 
we conventionally call the cases $\alpha<1$, $\alpha=1$ and $\alpha>1$ as supercritical, critical and subcritical cases, respectively.
Thus the operator $\LL$ defined by \eqref{L} under the kernel conditions \eqref{Kcd1}-\eqref{Kcd2} can be viewed as the critical and supercritical type cases and is the main concern in this paper.

For the drift-diffusion equation \eqref{DD} with the fractional Laplacian operator $\LL= |D|^\alpha$, Silvestre in \cite{Silv} considered the supercritical and critical cases ($\alpha\in (0,1]$),
and proved the interior $C^{1,\gamma}$ regularity of the solution provided that $u$ and $f$ belong to $L^\infty_t C^{1-\alpha+\gamma}_x$ ($\gamma\in(0,\alpha)$),
more precisely, the author showed the following regularity estimate
\begin{equation}\label{SilEs}
  \|\theta\|_{L^\infty([-\frac{1}{2},0]; C^{1,\gamma}(B_{1/2}))}\leq C \lp\|u\|_{L^\infty([-1,0]\times\R^d)} + \|f\|_{L^\infty([-1,0]; C^{1-\alpha+\gamma}(B_1))} \rp,
\end{equation}
where $C>0$ depends only on $d,\alpha$ and $\|u\|_{L^\infty([-1,0]; C^{1-\alpha+\gamma})}$.
The proof is by a locally approximate procedure where an extension derived in \cite{CafS} plays a key role.
We note that if the velocity field is divergence-free,
a similar $C^{1,\gamma}$ regularity improvement of weak solution had previously obtained by Constantin and Wu in \cite{CW2008} by using the Bony's paradifferential calculus.
For the drift-diffusion equation \eqref{DD} with general diffusion operator, Chen et al in \cite{CSZ} considered the case that $\theta_0\equiv 0$ and
$\LL$ is defined by \eqref{L}-\eqref{Kcd2} (in fact for slightly more general operator $\LL$),
and by applying the probabilistic method, the authors proved the $C^{1,\gamma}$ regularity of a continuous solution under the condition that
$u$ and $f$ are $\dot C^\delta_x$ ($\delta\in(1-\alpha+\sigma,1)$) H\"older continuous for each time.

If we slightly lower the regularity index in the assumption of $u$ and $f$, the solution of the equation \eqref{DD}-\eqref{L} may in general not have such a differentiable regularity.
For the drift-diffusion equation \eqref{DD} with $\LL=|D|^\alpha$, Silvestre in \cite{Silv2} proved that if $u\in L^\infty_t\dot C^{1-\alpha}_x$ for $\alpha\in (0,1)$
and $u\in L^\infty_{t,x}$ for $\alpha=1$,
and if $f\in L^\infty_{t,x}$, then the bounded solution becomes H\"older continuous for any positive time.
For the drift-diffusion equation \eqref{DD} with more general $\LL$, and under the divergence-free condition of $u$,
we refer to \cite{ChamM} for a similar improvement to H\"older continuous solution (see also \cite{MaeM} for a related result).
Note that the condition $u\in L^\infty_t \dot C^{1-\alpha}$ is invariant under the the scaling transformation
$u(x,t)\mapsto \lambda^{\alpha-1} u(\lambda^\alpha t, \lambda x)$ for all $\lambda>0$.
If we further weaken the regularity condition on $u$ in the supercritical case, the solution of \eqref{DD}-\eqref{L} may not even be continuous,
indeed, as proved by Silvestre et al in \cite{SVZ}, there is a divergence-free drift $u\in L^\infty_t C^\delta_x$ (for every $\delta<1-\alpha$)
so that the solution of the equation \eqref{DD} with $\LL=|D|^\alpha$ and $f=0$ forms a discontinuity starting from smooth initial data.

In this paper, we are concerned with the differentiability of the vanishing viscosity solution (i.e. the solution derived from \eqref{appDD} by passing $\epsilon\rightarrow 0$)
for the system \eqref{DD}-\eqref{L}. We impose no regularity assumption on the initial data,
and we generalize the result of Silvestre \cite{Silv} for more general L\'evy-type operator.
Our first result is about the drift-diffusion equation \eqref{DD} under the kernel conditions \eqref{Kcd1}-\eqref{Kcd3},
and the velocity field needs not to be divergence-free.

\begin{theorem}\label{thm1}
  Let the symmetric kernel $K(y)=K(-y)$ of the diffusion operator $\mathcal{L}$ satisfy the conditions (\ref{Kcd1})-(\ref{Kcd3}).
Suppose that $\theta_{0}\in C_0(\mathbb{R}^d)$, and for $T>0$ any given, the drift $u$ and the external force $f$ satisfy
\begin{equation}\label{ufcd1}
  u\in L^\infty([0,T]; \dot C^\delta(\R^d)),\quad\textrm{and}\quad f\in L^{\infty}([0,T]; C^{\delta}(\mathbb{R}^d)),\quad \textrm{for every  } \delta\in(1-\alpha+\sigma,\,1),
\end{equation}
then the drift-diffusion equation \EQ{DD}-\EQ{L} admits a vanishing viscosity solution $\theta\in L^\infty([0,T]; C_0(\R^d))$
which satisfies $\theta\in L^{\infty}((0,\,T],\,C^{1,\gamma}(\mathbb{R}^d))$ for some constant $\gamma \in (0,\delta+\alpha-\sigma-1)$. Moreover, for any $\tilde{t}\in (0,T)$, we have
\begin{equation}\label{target1}
  \|\theta\|_{L^{\infty}([\tilde{t},T]; C^{1,\gamma}(\mathbb{R}^d))}\leq C \lp\|\theta_{0}\|_{L^{\infty}} +  \|f\|_{L^\infty_T C^\delta}\rp ,
\end{equation}
where $C$ is a positive constant depending only on $\tilde{t}$, $T$, $\alpha$, $\sigma$, $d$, $\delta$ and $\|u\|_{L^\infty_T \dot C^{\delta}}$.
\end{theorem}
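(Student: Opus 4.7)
\emph{Proof proposal.} The plan is a three-stage program: (i) construct smooth vanishing viscosity approximations $\theta^\epsilon$ enjoying uniform $L^\infty$ and $\dot C^\beta$ bounds for positive time; (ii) bootstrap to a uniform pointwise $C^{1,\gamma}$ estimate at every point $(x_0, t_0)$ with $t_0 \geq \tilde t$ by iteratively approximating $\theta^\epsilon$ at dyadic scales by affine tangent functions in the spirit of Campanato and Schauder; (iii) pass to the vanishing viscosity limit via Arzel\`a-Ascoli.

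\emph{Step (i).} Regularize $(u, f, \theta_0)$ by mollification and add $-\epsilon \Delta \theta^\epsilon$ to obtain smooth solutions of
\begin{equation*}
  \partial_t \theta^\epsilon + (u^\epsilon \cdot \nabla) \theta^\epsilon + \LL \theta^\epsilon - \epsilon \Delta \theta^\epsilon = f^\epsilon, \qquad \theta^\epsilon(\cdot, 0) = \theta_0^\epsilon.
\end{equation*}
The maximum principle yields $\|\theta^\epsilon\|_{L^\infty_T L^\infty} \leq \|\theta_0\|_{L^\infty} + T \|f\|_{L^\infty}$. Invoking the H\"older regularity theorem of Silvestre~\cite{Silv2} and the treatment of general supercritical L\'evy operators of Chamorro-Menozzi~\cite{ChamM}, adapted to the kernel conditions \eqref{Kcd1}--\eqref{Kcd3}, one obtains $\|\theta^\epsilon\|_{L^\infty([\tilde t/2, T]; \dot C^\beta)} \leq C_0$ for some $\beta \in (0,1)$, uniformly in $\epsilon$.

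\emph{Step (ii).} Fix $(x_0, t_0)$ with $t_0 \geq \tilde t$, set $Q_r := B_r(x_0) \times [t_0 - r^\alpha, t_0]$, and $r_k := \lambda^k r_0$ with $\lambda \in (0, 1/2)$ small. Construct inductively affine functions $\ell_k(x) = a_k + b_k \cdot (x - x_0)$ satisfying $\sup_{Q_{r_k}} |\theta^\epsilon - \ell_k| \leq M r_k^{1+\gamma}$, $|a_{k+1} - a_k| \leq C M r_k^{1+\gamma}$, $|b_{k+1} - b_k| \leq CM r_k^\gamma$. The inductive step is executed by rescaling
\begin{equation*}
  \tilde\theta_k(y, s) := r_k^{-(1+\gamma)} \bigl(\theta^\epsilon(x_0 + r_k y, t_0 + r_k^\alpha s) - \ell_k(x_0 + r_k y)\bigr),
\end{equation*}
which, after a Galilean change of variable to subtract $u^\epsilon(x_0, t_0 + r_k^\alpha s)$, satisfies on $Q_1(0,0)$ a drift-diffusion equation with drift of size $O(r_k^{\alpha - 1 + \delta}\|u\|_{\dot C^\delta})$, forcing of size $O(r_k^{\alpha + \delta - 1 - \gamma}\|f\|_{C^\delta})$, and rescaled nonlocal operator $\tilde\LL_{r_k}$ with kernel $\tilde K_k(z) = r_k^{d+\alpha} K(r_k z)$. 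Because $\gamma < \delta + \alpha - \sigma - 1 \leq \delta + \alpha - 1$, the rescaled drift and forcing both tend to zero as $k \to \infty$. The heart of the induction is an \emph{improvement-of-flatness} lemma: if $|\tilde \theta| \leq M$ on $Q_1$ and $\tilde \theta$ solves a drift-diffusion equation with small drift and forcing and a nonlocal operator sufficiently close to a limit operator $\LL_\infty$, then there exists an affine $\tilde \ell$ with $|\tilde\ell(0)| + |\nabla \tilde\ell| \leq C_* M$ and $\sup_{Q_\lambda} |\tilde\theta - \tilde\ell| \leq M \lambda^{1+\gamma}$. This is proved by compactness and contradiction: any counterexample sequence would admit a subsequential $C^\beta$-limit (compactness coming from Step (i)) solving the linear constant-coefficient equation $\partial_t \theta_\infty + \LL_\infty \theta_\infty = 0$, for which classical interior regularity for homogeneous nonlocal parabolic equations gives $\theta_\infty \in C^{1,\gamma'}$ for every $\gamma' < \alpha$, providing a tangent plane that contradicts the assumed failure.

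\emph{Main obstacle and conclusion.} The principal difficulty is the nonlocal tail of $\tilde\LL_{r_k}$ arising from the imperfect scaling of $K$: the rescaled kernel $\tilde K_k$ only mimics $|z|^{-d-\alpha}$ in the near field $|z| \lesssim 1/r_k$ (with a lower bound that further degenerates by a factor $r_k^\sigma$), while for $|z| \gtrsim 1/r_k$ the slow decay of $K$ produces a genuine tail that must be absorbed using the uniform $\dot C^\beta$ bound from Step (i) together with the affine approximations already constructed at the coarser scales $r_0, \ldots, r_{k-1}$. The precise restriction $\gamma < \delta + \alpha - \sigma - 1$ is exactly the threshold at which the $\sigma$-order loss in \eqref{Kcd2} remains compatible with closing the iteration. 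Once the iteration is established, the pointwise $C^{1,\gamma}$ bound is uniform in $(x_0, t_0) \in \R^d \times [\tilde t, T]$ and in $\epsilon$; Arzel\`a-Ascoli (locally in space) together with the uniform $L^\infty$ bound then extracts a subsequential vanishing viscosity limit $\theta$ of \eqref{DD}--\eqref{L} satisfying \eqref{target1}.
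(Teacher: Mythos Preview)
Your strategy follows Silvestre's Campanato iteration rather than the paper's Littlewood--Paley/paraproduct approach; the paper in fact remarks that ``it seems rather hard (if not possible) to extend the method of \cite{Silv} for the drift-diffusion equation with more general diffusion operator,'' and your proposal runs into exactly this obstruction.

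The concrete failure is in the improvement-of-flatness step. Your compactness argument requires the rescaled operators $\tilde\LL_{r_k}$ to converge, along a subsequence, to a limit operator $\LL_\infty$ whose homogeneous parabolic problem $\partial_t\theta_\infty+\LL_\infty\theta_\infty=0$ enjoys $C^{1,\gamma'}$ interior regularity. But under \eqref{Kcd2} the rescaled kernel $\tilde K_k(z)=r_k^{d+\alpha}K(r_k z)$ satisfies only
\[
  c_2^{-1}\,r_k^{\sigma}\,|z|^{-(d+\alpha-\sigma)} \;\le\; \tilde K_k(z) \;\le\; c_2\,|z|^{-(d+\alpha)}
  \qquad (0<|z|\le r_k^{-1}),
\]
so when $\sigma>0$ the lower bound collapses as $k\to\infty$. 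Any subsequential limit kernel may be identically zero, in which case the limit equation is pure transport $\partial_t\theta_\infty=0$ with no smoothing whatsoever, and the contradiction argument cannot produce a tangent plane. Rescaling time by $r_k^{\alpha-\sigma}$ instead of $r_k^{\alpha}$ fixes the lower bound but sends the upper bound to infinity like $r_k^{-\sigma}$, so the family of admissible kernels is not compact either way. This is not a technicality that ``absorbing the tail'' can repair: it is precisely why the paper works frequency-by-frequency---Lemma~\ref{lem:MPloc} extracts the dissipation $c\,2^{j(\alpha-\sigma)}\|\Delta_j\theta\|_{L^\infty}$ directly at the maximum point, with no rescaling of the operator---and then climbs in Besov regularity via the time-weighted integral estimate of Lemma~\ref{lem-key} and Bony's paraproduct decomposition of the commutator $u\cdot\nabla\Delta_j\theta-\Delta_j(u\cdot\nabla\theta)$.

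A secondary gap: the preliminary $C^\beta$ bound you import in Step~(i) from \cite{Silv2} and \cite{ChamM} is not actually available in this setting---\cite{Silv2} treats only $\LL=|D|^\alpha$, and \cite{ChamM} requires $\divg\,u=0$, which Theorem~\ref{thm1} explicitly does not assume. The paper's bootstrap needs no such input; it starts from the raw maximum principle \eqref{MP1} and gains $B^{s}_{\infty,\infty}$ regularity with $s\in(1-\delta,\alpha-\sigma)$ in its first iteration.
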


Our second result states that if the velocity field is divergence-free, then the differentiability result can be achieved for the drift-diffusion equation under conditions \eqref{Kcd1}-\eqref{Kcd2},
without imposing the nonnegative condition \eqref{Kcd3}.

\begin{theorem}\label{thm2}
  Let the symmetric kernel $K(y)=K(-y)$ of the diffusion operator $\mathcal{L}$ satisfy (\ref{Kcd1})-(\ref{Kcd2}),
and the velocity field $u$ be divergence-free.
Assume that for $T>0$ any given, the drift $u$, the force $f$ and the initial data $\theta_0$ satisfy
\begin{equation}\label{ucd2}
  u \in L^{\infty}([0,T],\dot C^{\delta}(\mathbb{R}^d)), \quad\textrm{for every  }\delta\in(1-\alpha+\sigma,1),
\end{equation}
and
\begin{equation}\label{fcd2}
  f\in L^{\infty}([0,T]; B_{p,\infty}^\delta\cap B_{\infty,\infty}^\delta(\mathbb{R}^d)),\quad \theta_0\in L^p(\mathbb{R}^d),
  \quad\textrm{for every  }p\in [2,\infty).
\end{equation}
Then the drift-diffusion equation \eqref{DD}-\eqref{L} admits a vanishing viscosity solution $\theta\in L^\infty([0,T]; L^p(\R^d))$ which satisfies
$\theta\in L^{\infty}((0,T],C^{1,\gamma}(\mathbb{R}^d))$ with some constant $\gamma \in (0,\delta+\alpha-\sigma-1)$. Moreover, for every $t'\in (0,T)$, we have
\begin{equation}\label{target2}
  \|\theta\|_{L^{\infty}([t',T];C^{1,\gamma}(\mathbb{R}^d))}\leq C\lp \|\theta_0\|_{L^p} + \|f\|_{L^\infty_T (B_{p,\infty}^\delta\cap B_{\infty,\infty}^\delta)}\rp,
\end{equation}
with the constant $C$ depending only on $t'$,\,$T$,\,$\alpha$, $\sigma$, $d$, $\delta$ and $\|u\|_{L^\infty_T \dot C^{\delta}}$.
\end{theorem}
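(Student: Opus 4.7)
The plan is to reduce \THM{thm2} to \THM{thm1} by first propagating the $L^p$ initial datum into an $L^\infty$ bound after an arbitrarily small positive time, and then invoking the already-proved differentiability result on the shifted interval. Throughout, we work with the vanishing-viscosity approximation of \EQ{DD}, whose smooth solutions we denote by $\theta^\ep$, and the goal is to derive bounds uniform in $\ep$.

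The first step is a uniform $L^p$ estimate. Testing the regularized equation against $|\theta^\ep|^{p-2}\theta^\ep$ and using $\divg u=0$ to kill the drift contribution, the stable-like lower bound in \EQ{Kcd2} (together with \EQ{Kcd1} to absorb by Young's inequality the possibly signed part of $K$ on $\{|y|>1\}$) yields nonnegativity of the principal $\LL$-contribution and hence
\[
  \sup_{t\in[0,T]}\|\theta^\ep(t)\|_{L^p} \leq C\lp\|\theta_0\|_{L^p} + \|f\|_{L^1_T L^p}\rp
\]
uniformly in $\ep$, with $\|f\|_{L^p}$ controlled by $\|f\|_{B^\delta_{p,\infty}}$ via Besov embedding.

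The main new estimate is the $L^p\to L^\infty$ smoothing for positive times. The plan is to apply dyadic frequency blocks $\Delta_j$ to the equation, extract a dissipation of order $2^{j(\alpha-\sigma)}$ for each block $\|\Delta_j\theta^\ep\|_{L^p}$ via a generalized Bernstein inequality for the L\'evy-type operator $\LL$ (drawn from the lower bound $K(y)\gtrsim |y|^{-d-\alpha+\sigma}$ on $|y|\leq 1$), and control the transport commutator using $u\in L^\infty_T\dot C^\delta$ together with the divergence-free condition. Integrating in time produces an $L^\infty([t'/2,T];B^s_{p,\infty})$ bound for some $s<\alpha-\sigma$; choosing $p$ large enough that $B^s_{p,\infty}\hookrightarrow L^\infty$ then gives
\[
  \|\theta^\ep\|_{L^\infty([t'/2,T]\times\R^d)}\leq C\lp\|\theta_0\|_{L^p}+\|f\|_{L^\infty_T(B^\delta_{p,\infty}\cap B^\delta_{\infty,\infty})}\rp
\]
uniformly in $\ep$.

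With this bound in hand, we view $\theta^\ep(\cdot,t'/2)$ as new initial data in $L^\infty\cap C_0(\R^d)$ (continuity and decay at infinity are inherited from the parabolic smoothing of $\ep\Delta+\LL$ acting on $L^p$ data), note that $f\in L^\infty_T B^\delta_{\infty,\infty}$ coincides with $f\in L^\infty_T C^\delta$ for non-integer $\delta$, and apply \THM{thm1} on $[t'/2,T]$ to obtain the $C^{1,\gamma}$-estimate \EQ{target2} uniformly in $\ep$. A standard compactness argument (Arzel\`a-Ascoli on compact sets from the uniform $C^{1,\gamma}$-bound together with equicontinuity in time) then yields the vanishing-viscosity solution. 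The hard part is expected to be the $L^p\to L^\infty$ smoothing in the nonlocal, possibly signed-kernel setting with only fractional dissipation of order $\alpha-\sigma<1$: the Bernstein-type lower bound for $\LL$ and the transport commutator in the Besov estimate must both be handled carefully, so that the gain matches the same subcritical gap $\delta+\alpha-\sigma-1>0$ that already appears in \THM{thm1}.
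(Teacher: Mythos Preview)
Your reduction to \THM{thm1} does not work: \THM{thm1} is stated under the kernel conditions \eqref{Kcd1}--\eqref{Kcd3}, whereas \THM{thm2} deliberately drops the nonnegativity hypothesis \eqref{Kcd3}. This is precisely the point of \THM{thm2}. The $L^\infty$-based machinery behind \THM{thm1} genuinely requires $K\geq 0$: the frequency-localized maximum principle (\LEM{lem:MPloc}) evaluates the equation at a maximum point $x_{t,j}$ and uses $K(y)\geq 0$ together with $\theta_j(x_{t,j})-\theta_j(x_{t,j}+y)\geq 0$ to extract the dissipative lower bound \eqref{OK003}. Without \eqref{Kcd3} that pointwise argument collapses, so once you pass to $L^\infty$ data at time $t'/2$ you have no black box available to finish.

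What the paper does instead is to stay in the $L^p$ framework throughout and iterate there. The dissipative lower bound is obtained not pointwise but in integrated form via Plancherel and the symbol estimate of \LEM{lem:A}, giving (\LEM{lem:Lp1})
\[
  \int_{\R^d}\LL(\Delta_j\theta)\,|\Delta_j\theta|^{p-2}\Delta_j\theta\,\dd x \;\geq\; c\,2^{j(\alpha-\sigma)}\|\Delta_j\theta\|_{L^p}^p - \widetilde C\|\Delta_j\theta\|_{L^p}^p,
\]
which needs only \eqref{Kcd1}--\eqref{Kcd2}. From this one runs the same time-weighted Gr\"onwall/commutator scheme as in \THM{thm1}, but with $\|\cdot\|_{L^p}$ in place of $\|\cdot\|_{L^\infty}$, climbing from $B^{s_0}_{p,\infty}$ to $B^{s_0+s_1}_{p,\infty}$ and so on until the index exceeds $1$; then Besov embeddings (and, if $p$ is not large enough, a further iteration in $p$) produce the $C^{1,\gamma}$ bound. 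Your Step~2 is exactly the first rung of this ladder, so the fix is simply to keep iterating the $B^s_{p,\infty}$ estimate rather than jumping to \THM{thm1}.
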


The method in showing Theorems \ref{thm1} and \ref{thm2} is consistent with the method of paradifferential calculus used in \cite{CW2008},
but is mostly in a different style;
and by applying the technique of time function weighted estimate (where Lemma \ref{lem-key} is of great use),
we find that the process used here is not sensitive to the divergence-free condition of $u$
so that we can get rid of such a condition in Theorem \ref{thm1}. We use the $L^\infty$-framework in proving Theorem \ref{thm1}
and the $L^p$ ($p\in [2,\infty)$)-framework in Theorem \ref{thm2},
and the key diffusion effect of the L\'evy-type diffusion operator (for high frequency part) is derived in Lemma \ref{lem:MPloc}
and Lemma \ref{lem:Lp1} respectively. The iterative argument also plays an important role in the proof of both theorems.

We also note that the approach of \cite{Silv} is not adopted here,
and it seems rather hard (if not possible) to extend the method of \cite{Silv} for the drift-diffusion equation with more general diffusion operator.

\begin{remark}[On higher regularity]
By examining the proof of both theorems, we see that the index $\gamma$ indeed can be any number belonging to $(0,\delta+\alpha-\sigma-1)$,
which is achieved by pursuing the iteration process for more times. In fact, for Theorem \ref{thm1}, the worst scenario is that there is no $s\in (1-\delta,\alpha-\sigma)$ so that $\tilde{s}+s=1+\gamma$
after obtaining the estimate of $\|\theta\|_{L^\infty B^{\tilde{s}}_{\infty,\infty}}$ with $1<\tilde{s}<\delta+\alpha-\sigma$,
but we can instead start with $L^\infty B^{\tilde{s}'}_{\infty,\infty}$ for some $\tilde{s}'<\tilde{s}$ so that we can get the improvement $\|\theta\|_{L^\infty B^{\tilde{s}'+s}_{\infty,\infty}}$
with $\tilde{s}'+s=1+\gamma$;
while for Theorem \ref{thm2}, for any $\gamma\in(0,\delta+\alpha-\sigma-1)$, there exists some $\tilde{p}<\infty$ so that $\gamma +d/\tilde{p}<\delta+\alpha-\sigma-1$,
thus the target is to obtain the bound of $\|\theta\|_{L^\infty B^{1+\gamma +d/\tilde{p}}_{\tilde{p},\infty}}$,
which can be deduced from a more direct iterative process due to the increment $s\in(0,\alpha-\sigma)$.
Moreover, if \eqref{ufcd1} and \eqref{ucd2}-\eqref{fcd2} hold for any $\delta> 1-\alpha+\sigma$ by removing the restriction $\delta<1$,
we infer that the vanishing viscosity solution studied in Theorems \ref{thm1} and \ref{thm2} satisfies
\begin{equation*}
  \theta\in
  \begin{cases}
  L^\infty((0,T]; C^{[\delta+\alpha-\sigma]-1,\gamma}),\;\;\forall \gamma\in (0,1), & \quad \textrm{if   }\delta+\alpha-\sigma\in\N, \\
    L^\infty((0,T]; C^{[\delta+\alpha-\sigma],\gamma}),\;\; \forall\gamma\in (0,\delta+\alpha-\sigma-[\delta+\alpha-\sigma]),&\quad \textrm{if   }\delta+\alpha-\sigma\notin\N^+.
  \end{cases}
\end{equation*}

As a consequence of the above result, if $f=0$ and $u=\mathcal{P}\theta$ in the equation \eqref{DD} with $\mathcal{P}$ composed of zero-order pseudo-differential operators
(e.g. the SQG equation in \cite{CW2008}: $d=2$, $u=(-\mathcal{R}_2\theta, \mathcal{R}_1\theta)$ with $\mathcal{R}_j$, $j=1,2$ the usual Riesz transform),
we can deduce that under the assumptions of Theorems \ref{thm1} and \ref{thm2},
the corresponding solution belongs to $C^\infty((0,T]\times \R^d)$. Indeed, after obtaining the bound of $\|\theta\|_{L^\infty C^{1,\gamma}}$
(and $\|\theta\|_{L^\infty B^{1+\gamma +d/\tilde{p}}_{\tilde{p},\infty}}$ with some $\tilde{p}<\infty$ in Theorem \ref{thm2}) for any $\gamma\in (0,\delta+\alpha-\sigma-1)$,
from the Calder\'on-Zygmund theorem, we get $\nabla u\in L^\infty \dot C^\gamma$, which further leads to
\begin{equation*}
  \theta\in
  \begin{cases}
  L^\infty C^{[1+\gamma+\alpha-\sigma]-1,\gamma'},\;\;\forall \gamma'\in (0,1), & \quad \textrm{if   }1+\gamma+\alpha-\sigma\in\N, \\
    L^\infty C^{[1+\gamma+\alpha-\sigma],\gamma'},\;\; \forall\gamma'\in (0,\gamma+\alpha-\sigma-[\gamma+\alpha-\sigma]),&\quad \textrm{if   }1+\gamma+\alpha-\sigma\notin\N^+,
  \end{cases}
\end{equation*}
(in Theorem \ref{thm2} we in fact obtain a stronger estimate on $\theta$ in terms of $L^p$-based Besov spaces);
noting that the regularity index can be arbitrarily close to $\delta+2(\alpha-\sigma)$ by suitably choosing $\gamma$ and $\gamma'$,
thus by the bootstrapping method, we can iteratively improve the regularity and finally conclude the $C^\infty$-smoothness of the solution.
\end{remark}

\begin{remark}[The case $\delta=1-\alpha+\sigma$]
By examining the proof of both theorems, we have the following results.

$\bullet$    In Theorem \ref{thm1}, if the drift $u$ obeys the smallness condition
$$\|u\|_{_{L^{\infty}([0,T];C^{1-\alpha+\sigma}(\mathbb{R}^d))}}\leq \epsilon$$ for some absolute constant $\epsilon>0$ small enough independent of $T$ and $f\in L^{\infty}([0,T]; C^{1-\alpha+\sigma}(\mathbb{R}^d))$, then for any $\nu\in (0,\,1)$ and $\hat{t}\in(0,\,T)$ it holds that
$$ \|\theta\|_{L^{\infty}([\hat{t},T];C^{\nu}(\mathbb{R}^d))}\leq C(\|\theta_0\|_{L^\infty} + \|f\|_{L^\infty_T B_{\infty,\infty}^{1-\alpha+\sigma}}),$$
where $C$ is a positive constant depending only on $\hat{t}$, $T$, $\alpha$, $\sigma$, $d$ and $\|u\|_{L^\infty_T \dot C^{1-\alpha+\sigma}}$.

$\bullet$    This result is also true for Theorem \ref{thm2}. More precisely,
if the drift $u$ obeys the smallness condition
$$\|u\|_{_{L^{\infty}([0,T];C^{1-\alpha+\sigma}(\mathbb{R}^d))}}\leq  \widetilde{\epsilon}$$ for some absolute constant $\widetilde{\epsilon}>0$ small enough independent of $T$ and $f\in L^{\infty}([0,T]; B_{p,\infty}^{1-\alpha+\sigma}\cap B_{\infty,\infty}^{1-\alpha+\sigma}(\mathbb{R}^d))$, then for any $\widetilde{\nu}\in (0,\,1)$ and $\widetilde{t}\in (0,\,T)$ it holds that
$$\|\theta\|_{L^{\infty}([\widetilde{t},T];C^{\widetilde{\nu}}(\mathbb{R}^d))}\leq C  (\|\theta_0\|_{L^p} + \|f\|_{L^\infty_T (B_{p,\infty}^{1-\alpha+\sigma}\cap B_{\infty,\infty}^{1-\alpha+\sigma})}),$$
with the constant $C$ depending only on $\widetilde{t}$,\,$T$,\,$\alpha$, $\sigma$, $d$ and $\|u\|_{L^\infty_T \dot C^{1-\alpha+\sigma}}$.

\end{remark}

The outline of the paper is as follows. In Section \ref{sec:Prel}, we present some preliminary knowledge on Bony's paradifferential calculus and the Besov spaces.
Section \ref{sec:thm1} is dedicated to the proof of Theorem \ref{thm1}: we first show some useful auxiliary lemmas, then we prove the key a priori estimate \eqref{target1}
in the whole subsection \ref{subsec:thm1-2}, and then we sketch the existence part and conclude the theorem. We show Theorem \ref{thm2} in Section \ref{sec:thm2},
and the proof is also divided into three parts: the auxiliary lemmas, the a priori estimates and the existence issue,
which are treated in the subsections \ref{subsec:thm2-1} - \ref{subsec:thm2-3} respectively.

Throughout this paper, $C$ stands for a constant which may be different from line to line.
The notion $X\lesssim Y$ means that $X\leq CY$, and $X\approx Y$ implies that $X\lesssim Y$ and $Y\lesssim X$ simultaneously.
Denote $\mathcal{S}'(\mathbb{R}^d)$ the space of tempered distributions, $\mathcal{S}(\mathbb{R}^d)$ the Schwartz space of rapidly
decreasing smooth functions, $\mathcal{S}'(\mathbb{R}^d)/\mathcal{P}(\mathbb{R}^d)$ the quotient space of tempered distributions which modulo polynomials.
We use $\widehat{g}$ of $\mathcal{F}(g)$ to denote the Fourier transform of a tempered distribution, that is, $\widehat g(\xi)=\int_{\mathbb{R}^d}e^{i x\cdot \xi} g(x)\mathrm{d} x$.
For a number $a\in \mathbb{R}$, denote by $[a]$ the integer part of $a$.

\section{Preliminaries}\label{sec:Prel}
\setcounter{equation}{0}
In this preliminary section, we shall collect some basic facts on the Bony's paradifferential calculus and the Besov spaces.

First we recall the so-called Littlewood-Paley operators and their elementary properties.
Let $(\chi, \varphi)$ be a couple of smooth functions taking values on $[0, 1]$ such that
$\chi\in C_0^\infty(\mathbb{R}^d)$ is supported in the ball $\mathcal{B}:= \{\xi\in \mathbb{R}^d, |\xi|\leq \frac{4}{3}\}$,
$\varphi\in C_0^{\infty}(\mathbb{R}^d)$ is supported in the annulus $\mathcal{C}:= \{\xi\in \mathbb{R}^d, \frac{3}{4}\leq |\xi|\leq \frac{8}{3}\}$
and satisfies that (cf. \cite{BCD})
$$\chi(\xi)+\sum_{j\in \mathbb{N}}\varphi(2^{-j}\xi)=1, \; \forall \xi\in \mathbb{R}^d,\quad \textrm{and}
\quad\sum_{j\in \mathbb{{Z}}}\varphi(2^{-j}\xi)=1,  \; \forall  \xi \in \mathbb{R}^d\setminus \{0 \}.$$
For every $ u\in S'(\R^d)$,  we define the non-homogeneous Littlewood-Paley operators as follows,
$$\Delta_{-1}f=\chi(D)u; \quad \, \quad\Delta_{j}f=\varphi(2^{-j}D)f,\;\;S_j f=\sum_{-1\leq k\leq j-1} \Delta_{k}u,\;\;\;\forall j\in \mathbb{N}.$$
And the homogeneous Littlewood-Paley operators can be defined as follows
\begin{equation*}
  \dot{\Delta}_j f:= \varphi(2^{-j}D)f;\quad \dot S_j f:= \sum_{k\in\mathbb{Z},k\leq j-1}\dot \Delta_k f, \quad \forall j\in\mathbb{Z}.\quad
\end{equation*}
Also, we denote
$$\widetilde{\Delta}_j f:=\Delta_{j-1}f+\Delta_j f+\Delta_{j+1}f.$$
It is clear to see that, for any $f$ and $g$ belonging to $S'(\R^d)$, from the property of the frequency supports, we have
$$\Delta_j \Delta_l f\equiv 0, \quad |j-l|\geq 2 \quad \textrm{and}
\quad \Delta_k (S_{l-1} g\Delta_l g)\equiv 0 \quad |k-l|\geq 5.$$

Now we introduce the definition of Besov spaces.
Let $s\in \mathbb{R}, (p,r)\in[1,+\infty]^2$, then the inhomogeneous Besov space $B_{p,r}^s$ is defined as
\begin{equation*}
  B^s_{p,r}:=\Big\{f\in\mathcal{S}'(\mathbb{R}^d);\|f\|_{B^s_{p,r}}:=\|\{2^{js}\|\Delta
  _j f\|_{L^p}\}_{j\geq -1}\|_{\ell^r }<\infty  \Big\},
\end{equation*}
and the homogeneous space $\dot B^s_{p,r}$ is given by
\begin{equation*}
  \dot{B}^s_{p,r}:=\Big\{f\in\mathcal{S}'(\mathbb{R}^d)/\mathcal{P}(\mathbb{R}^d);
  \|f\|_{\dot{B}^s_{p,r}}:=\|\{2^{js}\|\dot{\Delta}_j f\|_{L^p}\}_{j\in\mathbb{Z}}\|_{\ell^r(\mathbb{Z})}<\infty  \Big\}.
\end{equation*}
For any non-integer $s>0$, the H\"{o}lder space $C^s=C^{[s],s-[s]}$ is equivalent to $B^s_{\infty,\infty}$ with $\|f\|_{C^s}\approx \|f\|_{B^s_{\infty,\infty}}$.

Bernstein's inequality plays an important role in the analysis involving Besov spaces.
\begin{lemma}[cf. \cite{BCD}]
Let $f\in L^a$, $1\leq a\leq b\leq \infty$. Then for every $(k,j)\in\mathbb{N}^2$, there exists a constant $C>0$ independent of $j$ such that
\begin{equation*}
 \sup_{|\alpha|=k}\set{\partial^\alpha S_j f}_{L^b}\leq C 2^{j(k+\frac{d}{a}-\frac{d}{b})}\|S_j f\|_{L^a},
\end{equation*}
and
\begin{equation*}
  C^{-1}2^{j k}\|\Delta_j f\|_{L^a}\leq \sup_{|\alpha|=k}\|\partial^\alpha \Delta_j f\|_{L^a}\leq C 2^{j k}\|\Delta_j f\|_{L^a}.
\end{equation*}
\end{lemma}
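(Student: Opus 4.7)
The plan is to prove the three inequalities by writing each Littlewood--Paley block as a convolution with a suitably rescaled Schwartz function, and then applying Young's inequality.

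For the first estimate and the upper bound in the second estimate, I will use the following standard reduction. Pick an auxiliary $\widetilde{\chi}\in C_0^\infty(\R^d)$ which equals $1$ on a neighborhood of the support of $\chi$, and an auxiliary $\widetilde{\varphi}\in C_0^\infty(\R^d\setminus\{0\})$ which equals $1$ on a neighborhood of the support of $\varphi$. Since $\widehat{S_j f}$ is supported in the ball of radius $\sim 2^j$ and $\widehat{\Delta_j f}$ is supported in the annulus of radius $\sim 2^j$, we may write
\[
S_j f = h_j * S_j f, \qquad \Delta_j f = g_j * \Delta_j f,
\]
where $h_j(x) = 2^{jd}\,\mathcal{F}^{-1}(\widetilde{\chi})(2^j x)$ and $g_j(x) = 2^{jd}\,\mathcal{F}^{-1}(\widetilde{\varphi})(2^j x)$. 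Differentiating transfers the derivatives onto $h_j$ or $g_j$, and by scaling we have
\[
\|\partial^\alpha h_j\|_{L^c} = 2^{j(|\alpha|+d-d/c)}\|\partial^\alpha \mathcal{F}^{-1}(\widetilde{\chi})\|_{L^c},
\]
and similarly for $g_j$, for any $c\in[1,\infty]$. Applying Young's inequality with $1+1/b=1/a+1/c$ yields the first inequality with constant $C=\sup_{|\alpha|=k}\|\partial^\alpha \mathcal{F}^{-1}(\widetilde{\chi})\|_{L^c}$, and taking $a=b=c=1$ (with the role of $\widetilde\chi$ replaced by $\widetilde\varphi$) gives the upper bound in the second inequality.

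The main obstacle is the lower bound $C^{-1}2^{jk}\|\Delta_j f\|_{L^a}\le \sup_{|\alpha|=k}\|\partial^\alpha\Delta_j f\|_{L^a}$, which requires inverting the loss of derivatives. The key observation is that on the support of $\widetilde{\varphi}$ one has $|\xi|\ge c_0>0$, so by the multinomial expansion
\[
|\xi|^{2k}=\sum_{|\alpha|=k}\binom{k}{\alpha}\xi^{2\alpha},
\]
one may write $\widetilde{\varphi}(\xi)=\sum_{|\alpha|=k}\ell_\alpha(\xi)\,(i\xi)^\alpha$, where
\[
\ell_\alpha(\xi):=\binom{k}{\alpha}\,\frac{\xi^\alpha\,\widetilde{\varphi}(\xi)}{i^{|\alpha|}|\xi|^{2k}}
\]
is smooth and compactly supported (the denominator never vanishes on $\operatorname{supp}\widetilde{\varphi}$). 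Rescaling $\xi\mapsto 2^{-j}\xi$ produces a factor $2^{-jk}$, and returning to physical space gives the identity
\[
\Delta_j f = 2^{-jk}\sum_{|\alpha|=k}\ell_\alpha(2^{-j}D)\,\partial^\alpha \Delta_j f.
\]
Each symbol $\ell_\alpha(2^{-j}\cdot)$ is the Fourier transform of a kernel whose $L^1$ norm equals $\|\mathcal{F}^{-1}\ell_\alpha\|_{L^1}$ (a finite quantity independent of $j$, by scaling), so Young's inequality delivers the desired bound.

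Finally, I will remark that the Schwartz norms appearing in the constants depend only on $d$, $k$, and on the chosen bumps $\chi,\varphi,\widetilde{\chi},\widetilde{\varphi}$, so the constant $C$ is indeed independent of $j$ as asserted.
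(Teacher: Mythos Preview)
Your argument is correct and is essentially the standard proof of Bernstein's inequality as given in \cite{BCD}; the paper itself does not supply a proof but merely cites that reference. The convolution-plus-Young argument for the upper bounds and the multinomial identity $|\xi|^{2k}=\sum_{|\alpha|=k}\binom{k}{\alpha}\xi^{2\alpha}$ used to invert the derivatives on the annulus are exactly the ingredients one finds there, so there is nothing to add.
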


\section{Proof of Theorem \ref{thm1}}\label{sec:thm1}
\setcounter{equation}{0}

\subsection{Auxiliary lemmas}

Before proceeding the main proof, we introduce several crucial auxiliary lemmas. First is the usual maximum principle for the drift-diffusion equation \eqref{DD}-\eqref{L}.
\begin{lemma}\label{lem01}
Let $u$ be a smooth vector field and $f$ be a smooth forcing term. Assume that $\theta$ is a smooth solution for the drift-diffusion \eqref{DD}-\eqref{L} with $\theta_0\in C_0(\R^d)$
and the assumptions of $K$ \eqref{Kcd1}-\eqref{Kcd3}. Then for $T>0$, we have
\begin{eqnarray}\label{MP1}
  \max_{0\leq t\leq T}\|\theta(t)\|_{L^{\infty}}\leq \|\theta_{0}\|_{ L^{\infty}}+\int_0^T \|f(t)\|_{ L^{\infty}}\dd t.
\end{eqnarray}
\end{lemma}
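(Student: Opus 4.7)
The plan is the classical pointwise maximum principle applied at a point $x_*$ where $\theta(\cdot,t_*)$ realizes its spatial supremum; the only nontrivial input is the sign $\mathcal{L}\theta(x_*,t_*)\geq 0$ at such a maximum, which is precisely where the pointwise nonnegativity assumption \eqref{Kcd3} becomes essential (this is why \eqref{Kcd3}, not merely \eqref{Kcd1}--\eqref{Kcd2}, is required for this lemma).

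Set $M(t):=\|\theta_0\|_{L^\infty}+\int_0^t\|f(s)\|_{L^\infty}\,\dd s$ and, for a small auxiliary parameter $\epsilon>0$, introduce the comparison function $\Psi_\epsilon(x,t):=\theta(x,t)-M(t)-\epsilon(1+t)$. Since $\theta$ is smooth and $\theta(\cdot,t)$ decays at infinity (inherited from $\theta_0\in C_0(\R^d)$ in the smooth-solution framework assumed here), $\Psi_\epsilon(\cdot,t)$ attains its supremum on $\R^d$ at each $t$, and $\Psi_\epsilon(\cdot,0)\leq -\epsilon<0$. I would then argue by contradiction: if $\Psi_\epsilon$ is not strictly negative on $\R^d\times[0,T]$, continuity together with the decay produce a first time $t_*\in(0,T]$ and a point $x_*\in\R^d$ with $\Psi_\epsilon(x_*,t_*)=0$. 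At $(x_*,t_*)$ one then has $\nabla\theta(x_*,t_*)=0$, so $(u\cdot\nabla)\theta(x_*,t_*)=0$; $\partial_t\theta(x_*,t_*)\geq M'(t_*)+\epsilon=\|f(t_*)\|_{L^\infty}+\epsilon$; and $\theta(x_*,t_*)\geq \theta(x,t_*)$ for every $x\in\R^d$, so by \eqref{Kcd3} the integrand of \eqref{L} is pointwise nonnegative at $x=x_*$ and hence $\mathcal{L}\theta(x_*,t_*)\geq 0$. Substituting these into \eqref{DD} at $(x_*,t_*)$ yields
$$
\|f(t_*)\|_{L^\infty}+\epsilon \leq \partial_t\theta(x_*,t_*) = -\mathcal{L}\theta(x_*,t_*)+f(x_*,t_*) \leq \|f(t_*)\|_{L^\infty},
$$
a contradiction. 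Letting $\epsilon\to 0^+$ gives $\theta(x,t)\leq M(t)$; applying the same argument to $-\theta$, which solves \eqref{DD}--\eqref{L} with $f$ replaced by $-f$, gives $-\theta(x,t)\leq M(t)$, and combining the two bounds yields \eqref{MP1}.

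There is no real obstacle here; only two mild points need attention. First, the supremum must be attained at an interior point, which is ensured by the decay at infinity of the smooth solution, and this is precisely why the $\epsilon$-shift is introduced -- it converts the supremum into a genuine maximum and removes any need for further approximation. Second, the principal value integral $\mathcal{L}\theta(x_*,t_*)$ must be well defined and nonnegative at the maximum: smoothness of $\theta$ together with \eqref{Kcd1} (via the cancellation $\theta(x)-\theta(x+y)-y\cdot\nabla\theta(x)=O(|y|^2)$ and the symmetry $K(y)=K(-y)$) gives well-definedness, while $\theta(x_*,t_*)\geq \theta(x_*+y,t_*)$ for all $y$ combined with $K(y)\geq 0$ gives the sign. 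Thus the entire conceptual content of the lemma lies in the nonnegativity hypothesis \eqref{Kcd3}.
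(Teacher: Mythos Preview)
Your proposal is correct and is exactly the standard pointwise maximum-principle argument the paper has in mind: the paper omits the proof entirely, merely citing C\'ordoba--C\'ordoba \cite{CorC} and noting that the nonnegativity condition \eqref{Kcd3} makes that argument go through. Your write-up supplies precisely those details, including the key observation that $\mathcal{L}\theta\geq 0$ at a spatial maximum thanks to $K\geq 0$.
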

\begin{proof}[Proof of Lemma \ref{lem01}]
  Since we have the nonnegative condition \eqref{Kcd3}, the proof is quite similar to \cite[Theorem 4.1]{CorC}, and we thus omit the details.
\end{proof}

The second is the maximum principle with diffusion effect for the following frequency localized drift-diffusion equation
\begin{equation}\label{flDD}
  \partial_t \Delta_j\theta+ u\cdot\nabla \Delta_j\theta  + \mathcal{L}\Delta_j\theta=g,
\end{equation}
where $j\in\N$, the operator $\mathcal{L}$ defined by \eqref{L} with the symmetric kernel $K$ satisfying \eqref{Kcd1}-\eqref{Kcd3}.
\begin{lemma}\label{lem:MPloc}
  Assume that $u$ and $f$ are smooth functions, and $\theta$ is a smooth solution for the equation \eqref{flDD} with $\Delta_j \theta\in C_0(\R^d)$ for all $t>0$ and $j\in\N$.
Then there exist two absolute positive constants $c$ and $C$ depending only on $\alpha,\sigma,d$ such that
\begin{equation}\label{keyest1}
  \partial_t\|\Delta_j\theta\|_{L^\infty}+c\,2^{j(\alpha-\sigma)}\|\Delta_j\theta\|_{L^\infty} \leq C\|\Delta_j \theta\|_{L^\infty}+\|g\|_{L^\infty}.
\end{equation}
\end{lemma}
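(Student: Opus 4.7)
The plan is a maximum-principle argument at a point $x_t\in\R^d$ where $|\Delta_j\theta(\cdot,t)|$ attains its $L^\infty$ norm, which exists by the hypothesis $\Delta_j\theta\in C_0(\R^d)$. WLOG $\Delta_j\theta(x_t,t)=M(t):=\|\Delta_j\theta(t)\|_{L^\infty}>0$ (the opposite-sign case follows by considering $-\Delta_j\theta$, which satisfies \eqref{flDD} with $g$ replaced by $-g$). At $x_t$, $\nabla\Delta_j\theta(x_t)=0$, so the drift $u(x_t)\cdot\nabla\Delta_j\theta(x_t)$ vanishes in \eqref{flDD}; since $M$ is locally Lipschitz (being the sup of smooth functions), a standard sup-norm differentiation argument yields
$$\frac{dM}{dt}\le\partial_t\Delta_j\theta(x_t,t)=g(x_t,t)-\mathcal{L}\Delta_j\theta(x_t,t)\le\|g(t)\|_{L^\infty}-\mathcal{L}\Delta_j\theta(x_t,t)\qquad\text{for a.e.\ }t.$$
Thus \eqref{keyest1} reduces to the pointwise lower bound $\mathcal{L}\Delta_j\theta(x_t,t)\ge c\,2^{j(\alpha-\sigma)}M(t)-C\,M(t)$ for some universal $c,C>0$ depending only on $\alpha,\sigma,d$.

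To prove this bound I combine the positivity \eqref{Kcd3} with the spectral localization of $\Delta_j\theta$. Since $(M-\Delta_j\theta(x_t+y))K(y)\ge 0$ pointwise in $y$, the contribution from $|y|>1$ to \eqref{L} is nonnegative and may be discarded, and the lower kernel bound \eqref{Kcd2} on $|y|\le 1$ yields
$$\mathcal{L}\Delta_j\theta(x_t)\ge c_2^{-1}\int_{|y|\le 1}\bigl(M-\Delta_j\theta(x_t+y)\bigr)|y|^{-(d+\alpha-\sigma)}\,dy.$$
Rescaling $y=2^{-j}z$ and setting $\psi(z):=\Delta_j\theta(x_t+2^{-j}z)/M$, this becomes $c_2^{-1}\,2^{j(\alpha-\sigma)}M\int_{|z|\le 2^j}(1-\psi(z))|z|^{-(d+\alpha-\sigma)}\,dz$, where $\psi$ lies in the class $\mathcal{A}$ of functions with $\|\psi\|_{L^\infty}\le 1$, $\psi(0)=1$, and $\widehat\psi$ supported in a fixed annulus $|\xi|\sim 1$ (independent of $j$). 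It thus suffices to show that
$$c_*:=\inf_{\psi\in\mathcal{A}}\int_{\R^d}(1-\psi(z))|z|^{-(d+\alpha-\sigma)}\,dz>0,$$
since the truncation to $|z|\le 2^j$ costs only $O(2^{-j(\alpha-\sigma)})$.

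The positivity of $c_*$ is a compactness/analyticity argument. Bernstein's inequality gives uniform $C^k$ bounds on $\mathcal{A}$, and since $\psi(0)$ is a maximum of $\psi$ we have $\nabla\psi(0)=0$ and $1-\psi(z)\lesssim|z|^2$ uniformly over $\mathcal{A}$, yielding an integrable majorant $(1-\psi(z))|z|^{-(d+\alpha-\sigma)}\lesssim\min(|z|^{2-(d+\alpha-\sigma)},|z|^{-(d+\alpha-\sigma)})$ (using $0<\alpha-\sigma\le 1<2$). Hence $\mathcal{A}$ is compact in $C_{\mathrm{loc}}(\R^d)$ by Arzel\`a--Ascoli, and the functional $\psi\mapsto\int(1-\psi)|z|^{-(d+\alpha-\sigma)}\,dz$ is continuous by dominated convergence, so the infimum is attained at some $\psi^*\in\mathcal{A}$. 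If $c_*=0$ then $\psi^*\equiv 1$ almost everywhere; but by Paley--Wiener $\psi^*$ is real-analytic, forcing $\psi^*\equiv 1$ everywhere, which contradicts $\widehat{\psi^*}$ being supported away from the origin. Thus $c_*>0$, and the desired lower bound holds for $j$ above some threshold $j_0$; the finitely many $j<j_0$ are absorbed into the $CM$ term using the trivial bound $\mathcal{L}\Delta_j\theta(x_t)\ge 0$. The main obstacle is securing this uniform positive infimum over $\mathcal{A}$; the compactness/Paley--Wiener mechanism is the essential ingredient, crucially exploiting both \eqref{Kcd2} (for the weight $|y|^{-(d+\alpha-\sigma)}$ matching the target exponent) and \eqref{Kcd3} (to discard the nonnegative tail).
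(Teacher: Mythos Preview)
Your argument is correct and follows the same overall strategy as the paper: evaluate at a maximizing point $x_t$, use $\nabla\Delta_j\theta(x_t)=0$ to kill the drift, discard the nonnegative tail $|y|>1$, and invoke the lower kernel bound \eqref{Kcd2} on $|y|\le 1$ to reduce to a lower bound on the localized fractional Laplacian at a maximum. The one substantive difference is in this last step: the paper rewrites the resulting integral as $c_{d,\alpha-\sigma}^{-1}|D|^{\alpha-\sigma}\theta_j(x_t)$ minus a harmless tail, and then quotes \cite[Lemma~3.4]{WangZh} for the spectral lower bound $|D|^{\alpha-\sigma}\theta_j(x_t)\ge \tilde c\,2^{j(\alpha-\sigma)}\|\theta_j\|_{L^\infty}$. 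You instead rescale $y=2^{-j}z$ and prove this bound from scratch via a compactness/Paley--Wiener argument on the normalized class $\mathcal{A}$. Your route is self-contained and conceptually clean, while the paper's citation is shorter and (via the explicit construction in \cite{WangZh}) yields a more concrete constant; but the two are interchangeable. One minor remark: your computation actually gives $\mathcal{L}\Delta_j\theta(x_t)\ge c_2^{-1}c_*\,2^{j(\alpha-\sigma)}M - c_2^{-1}C'M$ directly for \emph{all} $j\in\N$, so the threshold $j_0$ and the separate treatment of small $j$ at the end are unnecessary.
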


\begin{proof}[{Proof of Lemma \ref{lem:MPloc}}]
  Denote by $\theta_j:=\Delta_j\theta$, and from $\theta_j(t)\in C_0(\R^d)$ for $j\in\N$, there exists a point
$x_{t,j}\in \R^d$ so that $|\theta_{j}(t,x_{t,j})|=\|\theta_{j}\|_{L^{\infty}}>0$. Without loss of generality, we assume $\theta_j(t,x_{t,j})=\|\theta_j\|_{L^\infty}>0$
(otherwise, we consider the equation of $-\theta_j$ and replace $\theta_j$ by $-\theta_j$ in the following deduction).
Now by using \eqref{L}, \eqref{Kcd3}, \eqref{LamAlp} and the estimate $\theta(t,x_{t,j})-\theta(t,x_{t,j}+y)\geq 0$, we get
\begin{eqnarray}\label{OK001}
  \mathcal{L}\theta_j(x_{t,j})&=&
  \textrm{p.v.}\int_{\mathbb{R}^d}{\big(\theta_{j}(x_{t,j})-\theta_{j}(x_{t,j}+y) \big)K(y)\,\dd y} \nonumber\\&=&
  \textrm{p.v.} \int_{|y|\leq1}{\big(\theta_{j}(x_{t,j})-\theta_{j}(x_{t,j}+y) \big)K(y)\,\dd y}+
  \int_{|y|>1}{\big(\theta_{j}(x_{t,j})-\theta_{j}(x_{t,j}+y) \big)K(y)\,\dd y}\nonumber\\&\geq&
  c_2^{-1}\,\textrm{p.v.} \int_{|y|\leq1}{\frac{ \theta_{j}(x_{t,j})-\theta_{j}(x_{t,j}+y) }{|y|^{d+\alpha-\sigma}}\,\dd y}+
  \int_{|y|>1}{\big(\theta_{j}(x_{t,j})-\theta_{j}(x_{t,j}+y) \big)K(y)\,\dd y} \nonumber\\&\geq&
  c_2^{-1} \,\textrm{p.v.}\int_{\mathbb{R}^d}{\frac{\theta_j(x_{t,j})-\theta_j(x_{t,j}+y) }{|y|^{d+\alpha-\sigma}}\,\dd y}-c_2^{-1}\int_{|y|>1}{\frac{ \theta_{j}(x_{t,j})-\theta_{j}(x_{t,j}+y) }{|y|^{d+\alpha-\sigma}}\,\dd y}\nonumber\\&\geq& c_2^{-1} c_{d,\alpha}^{-1} \, |D|^{\alpha-\sigma}\theta_{j}(x_{t,j})-2 c_2^{-1}\|\theta_{j}\|_{L^{\infty}}
  \int_{|y|>1}{\frac{1}{|y|^{d+\alpha-\sigma}}\,dy}\nonumber\\&\geq&c_2^{-1} c_{d,\alpha}^{-1} \,|D|^{\alpha-\sigma}\theta_{j}(x_{t,j})-C\|\theta_{j}\|_{L^{\infty}}.
\end{eqnarray}
According to \cite[Lemma 3.4]{WangZh}, we have
\begin{eqnarray}\label{OK002}
  |D|^{\alpha-\sigma}\theta_j(x_{t,j})\geq \tilde{c} 2^{j(\alpha-\sigma)} \|\theta_j\|_{L^{\infty}},
\end{eqnarray}
with some $\tilde{c}>0$. Inserting (\ref{OK002}) into (\ref{OK001}) yields
\begin{eqnarray}\label{OK003}
  &&\mathcal{L}\theta_{j}(x_{t,j})\geq c\,2^{j(\alpha-\sigma)} \|\theta_{j}\|_{L^{\infty}}-C\|\theta_{j}\|_{L^{\infty}}.
\end{eqnarray}
Hence, by arguing as \cite[Lemma 3.2]{WangZh} and using the fact $\nabla\theta_j(t,\,x_{t,j})=0$, we get
\begin{eqnarray}\label{OK004}
  \partial_{t}\|\theta_{j}\|_{L^{\infty}} = \partial_{t}\theta_{j}(t,\,x_{t,j}) &=&
  -u(t,\,x_{t,j})\cdot\nabla \theta_{j}(t,\,x_{t,j})-\mathcal{L}\theta_{j}(t,\,x_{t,j})+g(t,\,x_{t,j}) \nonumber\\
  &\leq &- c2^{j(\alpha-\sigma)} \|\theta_j\|_{L^{\infty}}+C\|\theta_{j}\|_{L^{\infty}}+\|g\|_{L^\infty},
\end{eqnarray}
which finishes the proof of \eqref{keyest1}.
\end{proof}

\begin{lemma}\label{lem-key}
Let $\lambda>0$ and $0<l<1$. Then for any $t>0$, there exists a constant $C_l$
depending only on $l$ such that
\begin{eqnarray}\label{key-eq1}
  \int_0^t e^{-(t-\tau)\lambda}\tau^{-l}\,\dd\tau \leq C_l\lambda^{-1} t^{-l}.
\end{eqnarray}
In particular, for any $t>t_0 \geq 0$, we have
\begin{eqnarray}\label{key-eq2}
  \int_{t_0}^t e^{-(t-\tau)2^{(\alpha-\sigma) j}}(\tau-t_0)^{-l}\,\dd\tau=\int_0^{t-t_0} e^{-(t-t_0-\tau)
  2^{(\alpha-\sigma) j}}\tau^{-l}\,\dd\tau \leq C_l 2^{-(\alpha-\sigma) j}(t-t_0)^{-l}.
\end{eqnarray}
\end{lemma}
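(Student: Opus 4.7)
The plan is to reduce \eqref{key-eq2} to \eqref{key-eq1} by the translation $\tau \mapsto \tau - t_0$ (so the $t_0 \neq 0$ case adds nothing once the equality shown in the statement is noted) with $\lambda = 2^{(\alpha-\sigma)j}$. So the whole content is in \eqref{key-eq1}, a routine estimate for a convolution of a heat-like exponential kernel with a power singularity.

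For \eqref{key-eq1} I would split the interval $[0,t]$ at the midpoint $t/2$ and bound the two pieces separately. On $[0,t/2]$ the exponential is small: since $t-\tau \geq t/2$, we have $e^{-(t-\tau)\lambda}\leq e^{-t\lambda/2}$, and $\int_0^{t/2}\tau^{-l}\,d\tau = \frac{(t/2)^{1-l}}{1-l}$, so this piece is controlled by $C_l\, t^{1-l} e^{-t\lambda/2}$. Using the elementary inequality $x e^{-x/2}\leq C$ for all $x\geq 0$ with $x = t\lambda$, this is $\leq C_l\,\lambda^{-1} t^{-l}$. On $[t/2,t]$ the singular factor is tame: $\tau^{-l}\leq (t/2)^{-l}\leq 2\,t^{-l}$, so pulling it out gives
\begin{equation*}
\int_{t/2}^{t} e^{-(t-\tau)\lambda}\tau^{-l}\,\dd\tau \;\leq\; 2\, t^{-l}\int_{t/2}^{t} e^{-(t-\tau)\lambda}\,\dd\tau \;=\; 2\,t^{-l}\,\frac{1-e^{-t\lambda/2}}{\lambda} \;\leq\; 2\,\lambda^{-1} t^{-l}.
\end{equation*}
Adding the two bounds yields \eqref{key-eq1}.

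There is no real obstacle: the only subtlety is matching the two natural scales $\lambda^{-1}$ (from the exponential) and $t^{-l}$ (from the singular weight), and the midpoint split together with the bound $xe^{-x/2}\lesssim 1$ does exactly that. For \eqref{key-eq2} I would simply apply \eqref{key-eq1} to the shifted integral — the first equality in \eqref{key-eq2} is the substitution $\tau' = \tau-t_0$, and the resulting integral is of the form \eqref{key-eq1} with $t$ replaced by $t-t_0$ and $\lambda = 2^{(\alpha-\sigma)j}$, giving the stated bound.
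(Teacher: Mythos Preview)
Your proof is correct and essentially identical to the paper's: both split at the midpoint of the interval, bound the exponential by $e^{-t\lambda/2}$ on the piece near $\tau=0$ (then use $t\lambda\,e^{-t\lambda/2}\lesssim 1$), and bound $\tau^{-l}$ by $(t/2)^{-l}$ on the piece near $\tau=t$ (then integrate the exponential to get $\lambda^{-1}$). The only cosmetic difference is that the paper first substitutes $s=(t-\tau)\lambda$ before splitting, whereas you work directly in the $\tau$ variable; the resulting estimates coincide line by line.
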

\begin{proof}[{Proof of Lemma \ref{lem-key}}]
First, by changing of the variable $(t-\tau)\lambda=s$, one deduces
\begin{eqnarray}
  \int_0^t e^{-(t-\tau)\lambda}\tau^{-l}\,\dd\tau\nonumber  &=&
  \lambda^{-1} \int_0^{t\lambda} e^{-s}\Big(t-\frac{s}{\lambda}\Big)^{-l}\,\dd s\nonumber\\ &=&
  \lambda^{-1}\left(\int_0^{\frac{t\lambda}{2}} e^{-s} \Big(t-\frac{s}{\lambda}\Big)^{-l}\,\dd s
  + \int_{\frac{t\lambda}{2}}^{t\lambda} e^{-s} \Big(t-\frac{s}{\lambda}\Big)^{-l}\,\dd s\right)\nonumber\\
  &=&\lambda^{-1}(B_1+B_2).\nonumber
\end{eqnarray}
For the first term $B_1$, noting that $t-\frac{s}{\lambda}\geq \frac{1}{2}t$ for all $0\leq s\leq\frac{t\lambda}{2}$, we directly get
\begin{eqnarray}
  B_1 \leq 2^l t^{-l} \int_0^{\frac{t\lambda}{2}} e^{-s}\,\dd s\nonumber
  \leq 2^l t^{-l}\int_0^\infty e^{-s} \,\dd s\nonumber
  \leq  C 2^{l}t^{-l}.\nonumber
\end{eqnarray}
For the second term $B_2$, by changing of the variable $t-\frac{s}{\lambda}= s'$ and using the fact $t\lambda e^{-\frac{t\lambda}{2}}\leq C$, we deduce that
\begin{eqnarray}
  B_2 & \leq & e^{-\frac{t\lambda}{2}}\int_{\frac{t\lambda}{2}}^{t\lambda}
  \Big(t-\frac{s}{\lambda}\Big)^{-l}\,\dd s\nonumber \\
  & = & t^{1-l}\lambda e^{-\frac{t\lambda}{2}}\int_0^{\frac{1}{2}}  (s')^{-l}\,\dd s'\nonumber
  =\frac{2^{l-1}}{1-l} t^{-l}\Big(t\lambda e^{-\frac{t\lambda}{2}}\Big)\nonumber
  \leq\frac{ C 2^{l-1}}{1-l} t^{-l}.\nonumber
\end{eqnarray}
Combining the above two estimates, we obtain
$$\int_0^t e^{-(t-\tau)\lambda}\tau^{-l}\,\dd\tau\leq C\Big(2^l+\frac{2^{l-1}}{1-l} \Big) \lambda^{-1}t^{-l}=: C_l \lambda^{-1}t^{-l},$$
which concludes \eqref{key-eq1}.
\end{proof}

\subsection{A priori estimates}\label{subsec:thm1-2}

In this subsection, we assume $\theta$ is a smooth solution having suitable spatial decay for the drift-diffusion equations \eqref{DD}-\eqref{L}
with smooth $u$ and smooth $f$.
We intend to show the estimate \eqref{target1} and the proof is divided into three steps.


\textbf{Step 1: }the estimation of $\|\theta\|_{L^\infty([t_0,T];C^s(\R^d))}$ for any $s \in (1-\delta,\alpha-\sigma)$ and $t_0\in (0,T)$.

For every $j\in\mathbb{N}$, applying the inhomogeneous dyadic operator $\Delta_j$ to the considered equation (\ref{DD}), we obtain
\begin{eqnarray}\label{flEq1}
  \partial_{t}\Delta_j\theta+ u\cdot\nabla \Delta_j\theta +\mathcal{L}\Delta_j\theta=u\cdot\nabla \Delta_j\theta -\Delta_j(u\cdot\nabla \theta)+\Delta_j f.
\end{eqnarray}
Bony's paraproduct decomposition leads to
\begin{eqnarray}\label{Idec}
  u\cdot\nabla\Delta_j\theta-\Delta_j(u\cdot\nabla\theta)&=&-\sum_{|k-j|\leq4}  [ \Delta_j, S_{k-1}u\cdot\nabla ] \Delta_k\theta  \nonumber\\
  &&-\sum_{|k-j|\leq4}\Big( \Delta_j\big(\Delta_k u\cdot\nabla S_{k-1}\theta\big)-\Delta_k u\cdot\nabla\Delta_j S_{k-1}\theta \Big) \nonumber\\
  && -\sum_{k\geq j-2} \Big(\Delta_j\big(\Delta_k u\cdot\nabla \widetilde\Delta_k \theta\big)-\Delta_k u\cdot\nabla\Delta_j \widetilde\Delta_k\theta\Big) \nonumber\\
  &:=&I_1+I_2+I_3,
\end{eqnarray}
where in the first line we used the notation of commutator $[A,B]=AB-BA$ for two operators $A$ and $B$.
Taking advantage of Lemma \ref{lem:MPloc} in the frequency localised equation \eqref{flEq1}, we get
\begin{eqnarray}\label{thejLinf1}
  \frac{d}{dt} \|\Delta_j\theta\|_{L^\infty}+c 2^{j(\alpha-\sigma)}
  \|\Delta_j\theta\|_{L^\infty}\leq C_1\|\Delta_j \theta\|_{L^\infty}+\|I_1\|_{L^\infty}+\|I_2\|_{L^\infty}
  +\|I_3\|_{L^\infty}+\|\Delta_j f\|_{L^\infty}.
\end{eqnarray}
For $\|I_1\|_{L^\infty}$, noting that $I_1$ can be expressed as
\begin{equation}\label{I1exp}
  I_1 = -\sum_{|k-j|\leq4}\int_{\mathbb{R}^d}  \phi_j(x-y)\big(S_{k-1}u(y)-S_{k-1}u(x)\big)\cdot\nabla\Delta_k\theta(y)\,\dd y,
\end{equation}
where $\phi_j(x)=2^{jd}(\mathcal{F}^{-1}\varphi)(2^{j}x)$ and $\varphi\in C_0^{\infty}(\mathbb{R}^d)$ is the test function introduced in Section \ref{sec:Prel}, thus from the H\"older and Bernstein inequalities, one has
\begin{eqnarray}\label{I1Linf}
  \|I_1\|_{L^\infty} &\leq&  \sum_{|k-j|\leq4}\Big\| \int_{\mathbb{R}^d}\phi_j(x-y)\big(S_{k-1}u(y)-S_{k-1}u(x)\big)
  \cdot\nabla\Delta_k\theta(y)\,\dd y \Big\|_{L_x^\infty}\nonumber\\
  &\leq&  C\sum_{|k-j|\leq4} \Big\| \int_{\mathbb{R}^d} |\phi_j(x-y)|\, \|u\|_{\dot C^\delta}|x-y|^\delta \,|\nabla\Delta_k\theta(y)|\,\dd y \Big\|_{L_x^\infty} \nonumber\\
  & \leq& C 2^{-j\delta}\|u\|_{\dot C^\delta}\sum_{|k-j|\leq4}2^k\|\Delta_k\theta\|_{L^\infty}.
\end{eqnarray}
By virtue of H\"older's inequality and Bernstein's inequality again, we also see that
\begin{eqnarray}\label{I2Linf}
  \|I_2\|_{L^\infty} &\leq& \sum_{|k-j|\leq 4} \|\Delta_j(\Delta_k u\cdot\nabla S_{k-1}\theta)\|_{L^\infty}+
  \sum_{|k-j|\leq 4}\|\Delta_k u\cdot\nabla S_{k-1}\Delta_j\theta\|_{L^\infty} \nonumber\\
  &\leq& C \sum_{|k-j|\leq 4}\|\Delta_k u \|_{L^\infty} \|\nabla S_{k-1}\theta\|_{L^\infty} + C \sum_{|k-j|\leq 4} \|\Delta_k u\|_{L^\infty} \|\nabla \Delta_j \theta\|_{L^\infty}\nonumber\\
  &\leq& C 2^{-j\delta}\sum_{|k-j|\leq 4} 2^{k\delta} \|\Delta_k u\|_{L^\infty} \lp \sum_{l\leq j}2^l \|\Delta_l\theta\|_{L^\infty} \rp \nonumber\\
  &\leq& C 2^{-j\delta} \|u\|_{\dot C^\delta} \lp \sum_{k\leq j}2^k\|\Delta_k\theta\|_{L^\infty}\rp,
\end{eqnarray}
and
\begin{eqnarray}\label{I3Linf}
  \|I_3\|_{L^\infty} &\leq& \sum_{k\geq j-2}\Big\|\Delta_j(\Delta_k u\cdot\nabla \widetilde\Delta_k\theta)\Big\|_{L^\infty}
  +  \sum_{k\geq j-2}\Big\|  \Delta_k u\cdot\nabla \widetilde\Delta_k\Delta_j\theta\Big\|_{L^\infty}\nonumber\\
  &\leq& C \sum_{k\geq j-2}\|\Delta_k u\|_{L^\infty}2^k\|\widetilde\Delta_k\theta\|_{L^\infty} \nonumber\\
  &\leq& C  \sum_{k\geq j-2}2^{k(1-\delta)}2^{k\delta} \|\Delta_k u\|_{L^\infty}\|\widetilde\Delta_k\theta\|_{L^\infty} \nonumber\\
  &\leq& C \|u\|_{\dot C^{\delta}}\lp \sum_{k\geq j-3}2^{k(1-\delta)}\|\Delta_k\theta\|_{L^\infty}\rp.
\end{eqnarray}
Inserting the upper estimates (\ref{I1Linf})-(\ref{I3Linf}) into (\ref{thejLinf1}), we have
\begin{eqnarray}
  \frac{d}{dt}  \|\Delta_j\theta\|_{L^\infty}+c 2^{j(\alpha-\sigma)} \|\Delta_j\theta\|_{L^\infty}&\leq& C_1 \|\Delta_j \theta\|_{L^\infty}+
  \|\Delta_j f\|_{L^\infty}+C\|u\|_{\dot C^\delta} 2^{-j\delta} \sum_{k\leq j+4}2^k\| \Delta_k\theta\|_{L^\infty}+ \nonumber\\
  &&+\,C \|u\|_{\dot C^\delta} \sum_{k\geq j-3}2^{k(1-\delta)}\|\Delta_k\theta\|_{L^\infty} .
\end{eqnarray}
In particular, by some $j_0\in \N$ chosen later (cf. \eqref{j0}) so that $c 2^{j_0(\alpha-\sigma)} \geq  2 C_1$, or more precisely
\begin{equation}\label{j0-1}
  j_0\geq \Big[\frac{1}{\alpha-\sigma}\log_2\Big( \frac{2C_1}{c}\Big)\Big]+1,
\end{equation}
we see that for $j\geq j_0$,
\begin{eqnarray}\label{thejLinf2}
  \frac{d}{dt} \|\Delta_j\theta\|_{L^\infty}+ \frac{c}{2} 2^{j(\alpha-\sigma)} \|\Delta_j\theta\|_{L^\infty}
  &\leq& \|\Delta_j f\|_{L^\infty}+C\|u\|_{\dot C^\delta} 2^{-j\delta} \sum_{k\leq j+4}2^k\| \Delta_k\theta\|_{L^\infty}+ \nonumber\\
  &&+\,C \|u\|_{\dot C^\delta} \sum_{k\geq j-3}2^{k(1-\delta)}\|\Delta_k\theta\|_{L^\infty} \nonumber\\
  & := & F_j^1 + F_j^2 + F_j^3.
\end{eqnarray}
Consequently, Gr\"onwall's inequality guarantees that for every $j\geq j_0$ and $t\geq 0$,
\begin{eqnarray}\label{theLinf3}
  \|\Delta_j\theta(t)\|_{L^\infty} \leq e^{-\frac{c}{2}t 2^{j(\alpha-\sigma)}} \|\Delta_j\theta_0\|_{L^\infty}+ \int_0^t e^{-\frac{c}{2}(t-\tau)2^{j(\alpha-\sigma)}} \lp F_j^1(\tau) +F_j^2(\tau)+F_j^3(\tau)\rp\,\dd \tau.
\end{eqnarray}
On the other hand, we have the classical maximum principle \eqref{MP1} for the considered equation \eqref{DD}:
\begin{eqnarray}\label{MPes}
  \|\theta(t)\|_{L^\infty}\leq \|\theta_0\|_{ L^\infty}+\int_0^t \|f(\tau)\|_{ L^\infty}\dd t.
\end{eqnarray}
Observing that for all $t>0$, $j\in\N$ and $s\in(0,\alpha-\sigma)$,
\begin{eqnarray}\label{dataest}
  2^{j s}e^{-\frac{c}{2}t2^{j(\alpha-\sigma)}} \|\Delta_j\theta_0\|_{L^\infty} & \leq &   t^{-\frac{s}{\alpha-\sigma}}\lp (t\, 2^{j(\alpha-\sigma)})^{\frac{s}{\alpha-\sigma}}
  e^{-\frac{c}{2}t 2^{j(\alpha-\sigma)}} \rp \|\Delta_j\theta_0\|_{L^\infty} \nonumber \\
  & \leq & C_{\alpha,\sigma,s} t^{-\frac{s}{\alpha-\sigma}} \|\theta_0\|_{L^\infty},
\end{eqnarray}
we gather \eqref{theLinf3} and \eqref{MPes} to obtain
\begin{eqnarray}\label{key-est1}
  \|\theta(t)\|_{C^s} \approx \|\theta(t)\|_{ B^s_{\infty,\infty}}
  &\leq& \sup_{j\leq j_0} 2^{js}\|\Delta_j \theta(t)\|_{L^\infty} + \sup_{j\geq j_0}2^{js} \|\Delta_j\theta(t)\|_{L^\infty} \nonumber \\
  & \leq & C 2^{j_0} \lp \|\theta_0\|_{L^\infty}+ \|f\|_{L^1_t L^\infty} \rp + C_{\alpha,\sigma,s} t^{-\frac{s}{\alpha-\sigma}}\|\theta_0\|_{L^\infty} + \nonumber\\
  &&  + \sup_{j\geq j_0}\int_0^t e^{-\frac{c}{2}(t-\tau)2^{j(\alpha-\sigma)}} 2^{js} \lp F_j^1(\tau) + F_j^2(\tau) + F_j^3(\tau) \rp\dd \tau.
\end{eqnarray}
For the term containing $F^1_j$, we infer that for every $s\in (0,\alpha-\sigma+\delta)$,
\begin{eqnarray}\label{esFj1}
  \sup_{j\geq j_0}\int_0^t e^{-\frac{c}{2}(t-\tau)2^{j(\alpha-\sigma)}} 2^{js} F_j^1(\tau)\dd\tau & = & \sup_{j\geq j_0} \int_0^t e^{-\frac{c}{2}(t-\tau)2^{j(\alpha-\sigma)}}2^{js}\|\Delta_jf(\tau)\|_{L^\infty}\,\dd\tau \nonumber\\
  & \leq & C \sup_{j\geq j_0} \int_0^t e^{-\frac{c}{2} (t-\tau)2^{j(\alpha-\sigma)}} 2^{j(s-\delta)}\|f(\tau)\|_{\dot C^\delta}\dd\tau \nonumber\\
  & \leq & C  \|f\|_{L^\infty_t\dot C^\delta} \sup_{j\geq j_0} 2^{j(s-\delta)}
  \int_0^t e^{-\frac{c}{2} (t-\tau)2^{j(\alpha-\sigma)}} \,\dd\tau \nonumber \\
  & \leq & C \|f\|_{L^\infty_t\dot C^\delta} \sup_{j\geq j_0} 2^{j(s-\alpha +\sigma-\delta)} \leq C \|f\|_{L^\infty_t\dot C^\delta} .
\end{eqnarray}
For the term including $F^2_j$, thanks to \eqref{key-eq2} in Lemma \ref{lem-key}, we deduce that for every $s\in (0,\alpha-\sigma)$ and $\delta\in(1-\alpha+\sigma,1)$,
\begin{eqnarray}\label{esFj2}
  &&\sup_{j\geq j_0} \int_0^t e^{-\frac{c}{2}(t-\tau)2^{j(\alpha-\sigma)}} 2^{js} F_j^2(\tau)\dd\,\tau \nonumber \\
  &=& C \sup_{j\geq j_0} \int_0^t
  e^{-\frac{c}{2}(t-\tau)2^{j(\alpha-\sigma)}} \|u(\tau)\|_{\dot C^\delta} 2^{j(s-\delta)} \bigg(\sum_{k\leq j+4} 2^k \|\Delta_k \theta(\tau)\|_{L^\infty}\bigg) \dd\tau \nonumber\\
  &\leq &  C \|u\|_{L^\infty_t \dot C^\delta}\sup_{j\geq j_0} \int_0^t e^{-\frac{c}{2}(t-\tau)2^{j(\alpha-\sigma)}} 2^{j(s-\delta)} \bigg(\sum_{k\leq j+4} 2^{k(1-s)}\|\theta(\tau)\|_{B^s_{\infty,\infty}}\bigg)\dd\tau \nonumber\\
  &\leq & C \|u\|_{L^\infty \dot C^\delta} \lp \sup_{\tau\in (0,t]}\tau^{\frac{s}{\alpha-\sigma}}\|\theta(\tau)\|_{ B^s_{\infty,\infty}}\rp
  \sup_{j\geq j_0} 2^{j(1-\delta)} \int_0^t e^{-\frac{c}{2}(t-\tau)2^{j(\alpha-\sigma)}}\tau^{-\frac{s}{\alpha-\sigma}}\dd\tau \nonumber \\
  &\leq & C \|u\|_{L^\infty \dot C^\delta} \lp \sup_{\tau\in (0,t]}\tau^{\frac{s}{\alpha-\sigma}}\|\theta(\tau)\|_{ B^s_{\infty,\infty}}\rp
  t^{-\frac{s}{\alpha-\sigma}} \sup_{j\geq j_0} 2^{j(1-\delta-\alpha+\sigma)} \nonumber \\
  & \leq & C t^{-\frac{s}{\alpha-\sigma}}  2^{j_0(1-\alpha+\sigma-\delta)} \|u\|_{L^\infty_t \dot C^\delta}\lp \sup_{\tau\in (0,t]}\tau^{\frac{s}{\alpha-\sigma}}\|\theta(\tau)\|_{ B^s_{\infty,\infty}}\rp.
\end{eqnarray}
For the term including $F^3_j$ in \eqref{key-est1}, by using \eqref{key-eq2} again, we similarly get that for all $s \in (1-\delta,\alpha-\sigma)$ and $\delta\in(1-\alpha+\sigma,1)$,
\begin{eqnarray}\label{esFj3}
  &&\sup_{j\geq j_0} \int_0^t e^{-\frac{c}{2}(t-\tau)2^{j(\alpha-\sigma)}} 2^{js} F_j^3(\tau)\,\dd\tau \nonumber \\
  &=& C \sup_{j\geq j_0} \int_0^t
  e^{-\frac{c}{2}(t-\tau)2^{j(\alpha-\sigma)}} \|u(\tau)\|_{\dot C^\delta} 2^{js} \bigg(\sum_{k\geq j-3} 2^{k(1-\delta)} \|\Delta_k \theta(\tau)\|_{L^\infty}\bigg) \dd\tau \nonumber\\
  &\leq &  C \|u\|_{L^\infty_t \dot C^\delta}\sup_{j\geq j_0} 2^{js}\bigg(\sum_{k\geq j-3} 2^{k(1-\delta-s)}\bigg)
  \int_0^t e^{-\frac{c}{2}(t-\tau)2^{j(\alpha-\sigma)}} \|\theta(\tau)\|_{B^s_{\infty,\infty}}\dd\tau \nonumber\\
  &\leq & C \|u\|_{L^\infty \dot C^\delta} \lp \sup_{\tau\in (0,t]}\tau^{\frac{s}{\alpha-\sigma}}\|\theta(\tau)\|_{ B^s_{\infty,\infty}}\rp
  \sup_{j\geq j_0} 2^{j(1-\delta)} \int_0^t e^{-\frac{c}{2}(t-\tau)2^{j(\alpha-\sigma)}}\tau^{-\frac{s}{\alpha-\sigma}}\dd\tau \nonumber \\
  & \leq & C t^{-\frac{s}{\alpha-\sigma}}  2^{j_0(1-\alpha+\sigma-\delta)} \|u\|_{L^\infty_t \dot C^\delta}\lp \sup_{\tau\in (0,t]}\tau^{\frac{s}{\alpha-\sigma}}\|\theta(\tau)\|_{ B^s_{\infty,\infty}}\rp.
\end{eqnarray}
Inserting the above estimates (\ref{esFj1}), (\ref{esFj2}), (\ref{esFj3}) into (\ref{key-est1}) yields that for any $1-\delta<s<\alpha-\sigma$ and $0<t\leq T$,
\begin{eqnarray}\label{theCs}
  t^{\frac{s}{\alpha-\sigma}} \|\theta(t)\|_{B_{\infty,\infty}^s} & \leq &
  C t^{\frac{s}{\alpha-\sigma}} \lp \|\theta_0\|_{L^\infty} + \|f\|_{L^1_t L^\infty}\rp 2^{j_0} + C_{\alpha,\sigma,s} \|\theta_0\|_{L^\infty}+
  C t^{\frac{s}{\alpha-\sigma}}\|f\|_{L^\infty_t \dot C^\delta} + \nonumber\\
  && +\, C 2^{j_0(1-\alpha+\sigma-\delta)} \|u\|_{L^\infty_t \dot C^\delta}\lp \sup_{\tau\in (0,t]}\tau^{\frac{s}{\alpha-\sigma}}\|\theta(\tau)\|_{ B^s_{\infty,\infty}}\rp \nonumber \\
  & \leq &
  C T^{\frac{s}{\alpha-\sigma}} \lp \|\theta_0\|_{L^\infty} + \|f\|_{L^1_T L^\infty}\rp 2^{j_0} + C_{\alpha,\sigma,s} \|\theta_0\|_{L^\infty}+
  C T^{\frac{s}{\alpha-\sigma}}\|f\|_{L^\infty_T \dot C^\delta} + \nonumber\\
  && +\, C 2^{j_0(1-\alpha+\sigma-\delta)} \|u\|_{L^\infty_T \dot C^\delta}\lp \sup_{t\in (0,T]} t^{\frac{s}{\alpha-\sigma}}\|\theta(t)\|_{ B^s_{\infty,\infty}}\rp.
\end{eqnarray}
Since $1-\alpha+\sigma -\delta>0$, by further choosing $j_0$ such that $C 2^{j_0(1-\alpha+\sigma-\delta)} \|u\|_{L^\infty_T \dot C^\delta}\leq \frac{1}{2}$ and \eqref{j0-1} holds, or more precisely,
\begin{eqnarray}\label{j0}
  j_0 := \max\set{\Big[ \frac{1}{\delta-(1-\alpha+\sigma)}\log_2 \lp2C \|u\|_{L^\infty_T \dot C^\delta}\rp\Big], \Big[\frac{1}{\alpha-\sigma}\log_2\Big( \frac{2C_1}{c}\Big)\Big]}+1,
\end{eqnarray}
we have that for all $1-\delta<s<\alpha-\sigma$,
\begin{eqnarray}
  \sup_{t\in (0,T]} \lp t^{\frac{s}{\alpha-\sigma}}\|\theta(t)\|_{ B^s_{\infty,\infty}} \rp \leq  C( T + 1)\lp  2^{j_0}\big(\|\theta_0\|_{L^\infty} + \|f\|_{L^1_T L^\infty}\big) +
  \|f\|_{L^\infty_T\dot C^\delta} \rp,
\end{eqnarray}
which implies that for arbitrarily small $t_0\in (0,T)$ and every $s_0\in (1-\delta,\alpha-\sigma)$,
\begin{eqnarray}
  \sup_{t\in [t_0,T]} \|\theta(t)\|_{ B^{s_0}_{\infty,\infty}} \leq C t_0^{-\frac{s_0}{\alpha-\sigma}}( T + 1) \lp  2^{j_0}\big(\|\theta_0\|_{L^\infty} + \|f\|_{L^1_T L^\infty}\big) +
  \|f\|_{L^\infty_T\dot C^\delta} \rp,
\end{eqnarray}
with $j_0$ given by \eqref{j0}.

\textbf{Step 2:} the estimation of $\|\theta\|_{L^\infty([t_1,T]; B^{s_0+s_1}_{\infty,\infty})}$ for $s_0,s_1\in (1-\delta,\alpha-\sigma)$ and any $t_1\in (t_0,T)$.

For every $j\geq j_0$ with $j_0\in\N$ satisfying \eqref{j0-1} chosen later, applying the Gr\"onwall inequality to (\ref{thejLinf2}) over the time interval $[t_0, t]$ (for $t>t_0>0$) gives
\begin{equation}\label{theLinf4}
  \|\Delta_j\theta(t)\|_{L^\infty} \leq e^{-\frac{c}{2} (t-t_0) 2^{j(\alpha-\sigma)}} \|\Delta_j\theta(t_0)\|_{L^\infty}
  + \int_{t_0}^t e^{-\frac{c}{2}(t-\tau)2^{j(\alpha-\sigma)}} \lp F_j^1(\tau) +F_j^2(\tau)+F_j^3(\tau)\rp\,\dd \tau.
\end{equation}
Noticing that for $j\in \N$, $s_0\in(1-\delta,\alpha-\sigma)$ and all $s\in (0,\alpha-\sigma)$,
\begin{eqnarray}\label{dataest1}
  && e^{-\frac{c}{2}(t-t_0)2^{j(\alpha-\sigma)}}2^{j (s_0 + s)} \|\Delta_{j}\theta(t_{0})\|_{L^{\infty}}\nonumber\\
  &\leq& e^{-\frac{c}{2}(t-t_0)2^{j(\alpha-\sigma)}}2^{js} \|\theta(t_0)\|_{B_{\infty,\infty}^{s_0}} \nonumber\\
  &\leq & C(t-t_{0})^{-\frac{s}{\alpha-\sigma}}\Big((t-t_0)2^{j(\alpha-\sigma)}\Big)^{\frac{s}{\alpha-\sigma}} e^{-\frac{c}{2}(t-t_0)2^{j(\alpha-\sigma)}}
  \|\theta(t_0)\|_{B_{\infty,\infty}^{s_0}} \nonumber\\
  &\leq & C_{\alpha,\sigma,s} (t-t_0)^{-\frac{s}{\alpha-\sigma}}\|\theta(t_0)\|_{B_{\infty,\infty}^{s_0}},
\end{eqnarray}
by arguing as \eqref{key-est1} we obtain that for all $t\geq t_0>0$,
\begin{eqnarray}\label{key-est2}
  \|\theta(t)\|_{ B^{s_0+s}_{\infty,\infty}}
  &\leq& \sup_{j\leq j_0} 2^{j(s_0+s)}\|\Delta_j \theta(t)\|_{L^\infty} + \sup_{j\geq j_0}2^{j(s_0+s)} \|\Delta_j\theta(t)\|_{L^\infty} \nonumber \\
  & \leq & C 2^{j_0(s_0+s)} \lp \|\theta_0\|_{L^\infty}+ \|f\|_{L^1_t L^\infty} \rp + C_{\alpha,\sigma,s} (t-t_0)^{-\frac{s}{\alpha-\sigma}}\|\theta(t_0)\|_{B^{s_0}_{\infty,\infty}} + \nonumber\\
  &&  + \sup_{j\geq j_0}\int_{t_0}^t e^{-\frac{c}{2}(t-\tau) 2^{j(\alpha-\sigma)}} 2^{j(s_0+s)} \lp F_j^1(\tau) + F_j^2(\tau) + F_j^3(\tau) \rp\dd \tau.
\end{eqnarray}
For the term containing $F^1_j$, similarly as obtaining \eqref{esFj1}, we get that for every $s\in(0,\alpha-\sigma)$ and $s_0+s<\delta+\alpha-\sigma$,
\begin{eqnarray}\label{esFj1-2}
  && \sup_{j\geq j_0}\int_{t_0}^t e^{-\frac{c}{2}(t-\tau)2^{j(\alpha-\sigma)}} 2^{j(s_0+s)} F_j^1(\tau)\dd\tau \nonumber \\
  & = & \sup_{j\geq j_0} \int_{t_0}^t e^{-\frac{c}{2}(t-\tau)2^{j(\alpha-\sigma)}}2^{j(s_0+s)}\|\Delta_jf(\tau)\|_{L^\infty}\,\dd\tau \nonumber\\
  & \leq & C \sup_{j\geq j_0} \int_{t_0}^t e^{-\frac{c}{2} (t-\tau)2^{j(\alpha-\sigma)}} 2^{j(s_0+s-\delta)}\|f(\tau)\|_{\dot C^\delta}\dd\tau \nonumber\\
  & \leq & C \|f\|_{L^\infty_t \dot C^\delta} \sup_{j\geq j_0} 2^{j(s_0+s-\delta)}
  \int_{t_0}^t e^{-\frac{c}{2} (t-\tau)2^{j(\alpha-\sigma)}} \,\dd\tau \nonumber \\
  & \leq & C \|f\|_{L^\infty_t \dot C^\delta} \sup_{j\geq j_0} 2^{j(s_0+s-\alpha +\sigma-\delta)} \leq C \|f\|_{L^\infty_t \dot C^\delta}.
\end{eqnarray}
For the term including $F^2_j$ in \eqref{key-est2}, by arguing as \eqref{esFj2}, we deduce that for every $s\in (0,\alpha-\sigma)$ and $s_0+s\leq 1$,
\begin{eqnarray}\label{esFj2-2}
  &&\sup_{j\geq j_0} \int_{t_0}^t e^{-\frac{c}{2}(t-\tau)2^{j(\alpha-\sigma)}} 2^{j(s_0+s)} F_j^2(\tau)\,\dd\tau \nonumber \\
  &=& C \sup_{j\geq j_0} \int_{t_0}^t
  e^{-\frac{c}{2}(t-\tau)2^{j(\alpha-\sigma)}} \|u(\tau)\|_{\dot C^\delta} 2^{j(s_0+s-\delta)} \bigg(\sum_{-1\leq k\leq j+4} 2^k \|\Delta_k \theta(\tau)\|_{L^\infty}\bigg) \dd\tau \nonumber\\
  &\leq &  C \|u\|_{L^\infty_t \dot C^\delta}\sup_{j\geq j_0} \int_{t_0}^t e^{-\frac{c}{2}(t-\tau)2^{j(\alpha-\sigma)}} 2^{j(s_0+s-\delta)}
  \bigg(\sum_{-1\leq k\leq j+4} 2^{k(1-s-s_0)}\|\theta(\tau)\|_{B^{s_0+s}_{\infty,\infty}}\bigg)\dd\tau \nonumber\\
  &\leq & C \|u\|_{L^\infty_t \dot C^\delta} \lp \sup_{\tau\in (t_0,t]}(\tau-t_0)^{\frac{s}{\alpha-\sigma}}\|\theta(\tau)\|_{ B^{s_0+s}_{\infty,\infty}}\rp
  \sup_{j\geq j_0} \lp 2^{j(1-\delta)} j \rp \int_{t_0}^t e^{-\frac{c}{2}(t-\tau)2^{j(\alpha-\sigma)}}(\tau-t_0)^{-\frac{s}{\alpha-\sigma}}\dd\tau \nonumber \\
  &\leq & C \|u\|_{L^\infty_t \dot C^\delta} \lp \sup_{\tau\in (t_0,t]}(\tau-t_0)^{\frac{s}{\alpha-\sigma}}\|\theta(\tau)\|_{ B^s_{\infty,\infty}}\rp
  (t-t_0)^{-\frac{s}{\alpha-\sigma}} \sup_{j\geq j_0} \lp 2^{j(1-\delta-\alpha+\sigma)}j \rp \nonumber \\
  & \leq & C \|u\|_{L^\infty_t \dot C^\delta}\lp \sup_{\tau\in (t_0,t]}(\tau-t_0)^{\frac{s}{\alpha-\sigma}}\|\theta(\tau)\|_{ B^s_{\infty,\infty}}\rp
  (t-t_0)^{-\frac{s}{\alpha-\sigma}}  2^{j_0 \frac{1-\alpha+\sigma-\delta}{2}},
\end{eqnarray}
and for $1<s_0+s < \delta+\alpha-\sigma$,
\begin{eqnarray}\label{esFj2-22}
  &&\sup_{j\geq j_0} \int_{t_0}^t e^{-\frac{c}{2}(t-\tau)2^{j(\alpha-\sigma)}} 2^{j(s_0+s)} F_j^2(\tau)\,\dd\tau \nonumber \\
  &\leq &  C \|u\|_{L^\infty_t \dot C^\delta}\sup_{j\geq j_0} \int_{t_0}^t e^{-\frac{c}{2}(t-\tau)2^{j(\alpha-\sigma)}} 2^{j(s_0+s-\delta)}
  \bigg(\sum_{-1\leq k\leq j+4} 2^{k(1-s-s_0)}\|\theta(\tau)\|_{B^{s_0+s}_{\infty,\infty}}\bigg)\dd\tau \nonumber\\
  &\leq & C \|u\|_{L^\infty_t \dot C^\delta} \lp \sup_{\tau\in (t_0,t]}(\tau-t_0)^{\frac{s}{\alpha-\sigma}}\|\theta(\tau)\|_{ B^{s_0+s}_{\infty,\infty}}\rp
  \sup_{j\geq j_0}  2^{j(s_0+s-\delta)} \int_{t_0}^t e^{-\frac{c}{2}(t-\tau)2^{j(\alpha-\sigma)}}(\tau-t_0)^{-\frac{s}{\alpha-\sigma}}\dd\tau \nonumber \\
  &\leq & C \|u\|_{L^\infty_t \dot C^\delta} \lp \sup_{\tau\in (t_0,t]}(\tau-t_0)^{\frac{s}{\alpha-\sigma}}\|\theta(\tau)\|_{ B^{s_0+s}_{\infty,\infty}}\rp
  (t-t_0)^{-\frac{s}{\alpha-\sigma}} \sup_{j\geq j_0} \lp 2^{j(s_0+s-\delta-\alpha+\sigma)} \rp \nonumber \\
  & \leq & C \|u\|_{L^\infty_t \dot C^\delta}\lp \sup_{\tau\in (t_0,t]}(\tau-t_0)^{\frac{s}{\alpha-\sigma}}\|\theta(\tau)\|_{ B^{s_0+s}_{\infty,\infty}}\rp
  (t-t_0)^{-\frac{s}{\alpha-\sigma}}  2^{j_0 (s_0+s-\alpha+\sigma-\delta)}.
\end{eqnarray}
For the term including $F^3_j$ in \eqref{key-est2}, by using \eqref{key-eq2} again, we estimate similarly as \eqref{esFj3} to get that for all $s\in (1-\delta,\alpha-\sigma)$,
\begin{eqnarray}\label{esFj3-2}
  &&\sup_{j\geq j_0} \int_{t_0}^t e^{-\frac{c}{2}(t-\tau)2^{j(\alpha-\sigma)}} 2^{j(s_0+s)} F_j^3(\tau)\,\dd\tau \nonumber \\
  &=& C \sup_{j\geq j_0} \int_{t_0}^t
  e^{-\frac{c}{2}(t-\tau)2^{j(\alpha-\sigma)}} \|u(\tau)\|_{\dot C^\delta} 2^{j(s_0+s)} \bigg(\sum_{k\geq j-3} 2^{k(1-\delta)} \|\Delta_k \theta(\tau)\|_{L^\infty}\bigg) \dd\tau \nonumber\\
  &\leq &  C \|u\|_{L^\infty_t \dot C^\delta}\sup_{j\geq j_0} 2^{j(s_0+s)}\bigg(\sum_{k\geq j-3} 2^{k(1-\delta-s_0-s)}\bigg)
  \int_{t_0}^t e^{-\frac{c}{2}(t-\tau)2^{j(\alpha-\sigma)}} \|\theta(\tau)\|_{B^{s_0+s}_{\infty,\infty}}\dd\tau \nonumber\\
  &\leq & C \|u\|_{L^\infty_t \dot C^\delta} \lp \sup_{\tau\in (t_0,t]}(\tau-t_0)^{\frac{s}{\alpha-\sigma}}\|\theta(\tau)\|_{ B^{s_0+s}_{\infty,\infty}}\rp
  \sup_{j\geq j_0} 2^{j(1-\delta)} \int_{t_0}^t e^{-\frac{c}{2}(t-\tau)2^{j(\alpha-\sigma)}}(\tau-t_0)^{-\frac{s}{\alpha-\sigma}}\dd\tau \nonumber \\
  & \leq & C \|u\|_{L^\infty_t \dot C^\delta}\lp \sup_{\tau\in (t_0,t]}(\tau-t_0)^{\frac{s}{\alpha-\sigma}}\|\theta(\tau)\|_{ B^{s_0+s}_{\infty,\infty}}\rp
  (t-t_0)^{-\frac{s}{\alpha-\sigma}}  2^{j_0(1-\alpha+\sigma-\delta)} .
\end{eqnarray}
Plugging the estimates \eqref{esFj1-2}-\eqref{esFj3-2} into \eqref{key-est2}, and in a similar way as obtaining \eqref{theCs}, we have that for every $t\in (t_0,T]$,
$s\in (1-\delta,\alpha-\sigma)$ and $ s_0+s < \delta+\alpha-\sigma$,
\begin{eqnarray}
  && (t-t_0)^{\frac{s}{\alpha-\sigma}} \|\theta(t)\|_{B_{\infty,\infty}^{s_0+s}} \nonumber \\
  & \leq &
  C T^{\frac{s}{\alpha-\sigma}} \lp \|\theta_0\|_{L^\infty} + \|f\|_{L^1_T L^\infty}\rp 2^{j_0(s_0+s)} + C_{\alpha,\sigma,s} \|\theta(t_0)\|_{B^{s_0}_{\infty,\infty}}+
  C T^{\frac{s}{\alpha-\sigma}}\|f\|_{L^\infty_t\dot C^\delta} + \nonumber\\
  && + \,
  \begin{cases}
    C 2^{j_0\frac{1-\alpha+\sigma-\delta}{2}} \|u\|_{L^\infty_T \dot C^\delta} \lp \sup_{t\in (t_0,T]} (t-t_0)^{\frac{s}{\alpha-\sigma}}\|\theta(t)\|_{B^{s_0+s}_{\infty,\infty}}\rp,
    \quad &\textrm{if}\;\; s_0+s \leq 1, \\
    C 2^{j_0(s_0+s-\alpha+\sigma-\delta)} \|u\|_{L^\infty_T \dot C^\delta}\lp \sup_{t\in (t_0,T]} (t-t_0)^{\frac{s}{\alpha-\sigma}}\|\theta(t)\|_{B^{s_0+s}_{\infty,\infty}}\rp,
    \quad & \textrm{if}\;\; 1< s_0+s < \delta+\alpha-\sigma.
  \end{cases}
\end{eqnarray}
Hence by choosing $j_0\in \N$ as
\begin{eqnarray}\label{j0-2}
  j_0 :=
  \begin{cases}
    \;\max\set{\Big[\frac{2}{\delta-(1-\alpha+\sigma)}\log_2 \lp2C \|u\|_{L^\infty_T \dot C^\delta}\rp\Big], \Big[\frac{1}{\alpha-\sigma}\log_2\Big( \frac{2C_1}{c}\Big)\Big]}+1,
    \quad &\textrm{if}\;\; s_0+s \leq 1, \\
    \max\set{\Big[ \frac{1}{\delta +\alpha-\sigma-(s_0+s)}\log_2 \lp2C \|u\|_{L^\infty_T \dot C^\delta}\rp\Big], \Big[\frac{1}{\alpha-\sigma}\log_2\Big( \frac{2C_1}{c}\Big)\Big]}+1,
    \quad & \textrm{if}\;\; 1< s_0+s < \delta+\alpha-\sigma,
  \end{cases}
\end{eqnarray}
we find that for all $s\in (1-\delta,\alpha-\sigma)$ and $ s_0+s < \delta+\alpha-\sigma$,
\begin{eqnarray}
  && \sup_{t\in (t_0,T]} \lp (t-t_0)^{\frac{s}{\alpha-\sigma}}\|\theta(t)\|_{ B^{s_0+s}_{\infty,\infty}} \rp \nonumber\\
  & \leq &  C( T + 1) \lp \|\theta_0\|_{L^\infty}
  +\|f\|_{L^1_T L^\infty}\rp 2^{j_0(s_0+s)} + C\|\theta(t_0)\|_{B^{s_0}_{\infty,\infty}}  +  C(T+1)\|f\|_{L^\infty_T\dot C^\delta},
\end{eqnarray}
which specially guarantees that for any $t_1>t_0>0$ (which may be arbitrarily close to $t_0$) and every
$s_0, s_1 \in ( 1-\delta, \alpha-\sigma)$ satisfying $s_0+s_1<\delta+\alpha-\sigma$,
\begin{eqnarray}\label{esthe-s0s1}
  \sup_{t\in [t_1,T]} \|\theta(t)\|_{ B^{s_0+s_1}_{\infty,\infty}} & \leq & C (t_1-t_0)^{-\frac{s_1}{\alpha-\sigma}}\lp ( T + 1) \big(\|\theta_0\|_{L^\infty} + \|f\|_{L^1_T L^\infty}\big)2^{j_0(s_0+s_1)} + \|\theta(t_0)\|_{B^{s_0}_{\infty,\infty}}\rp + \nonumber\\
  && + \, C(t_1-t_0)^{-\frac{s_1}{\alpha-\sigma}}(T+1)\|f\|_{L^\infty_T \dot C^\delta},
\end{eqnarray}
with $j_0$ given by \eqref{j0-2}.

\textbf{Step 3: }the estimation of $\|\theta\|_{L^\infty([\tilde{t}, T];C^{1,\gamma})}$ for some $\gamma>0$ and any $\tilde{t}\in (0,T)$.

If $\alpha-\sigma \in (\frac{1}{2},1)$, we can select appropriate $s_0,s_1\in(1-\delta,\alpha-\sigma)$ so that $1<s_0+s_1<\delta+\alpha-\sigma$, thus from \eqref{esthe-s0s1}
we obtain that for $\gamma=s_0+s_1-1>0$,
\begin{eqnarray*}
  \sup_{t\in [t_1,T]}\|\theta(t)\|_{C^{1,\gamma}}\approx \sup_{t\in [t_1,T]} \|\theta(t)\|_{B^{s_0+s_1}_{\infty,\infty}}\leq C,
\end{eqnarray*}
with $C$ the bound on the right-hand-side of \eqref{esthe-s0s1}.

For the remained scope $\alpha-\sigma\in (0,\frac{1}{2}]$, we have to iterate the above procedure in \textbf{Step 2} for more times.
Assume that for some small number $t_k>0$, $k\in\N$, we have a finite bound on $\|\theta(t_k)\|_{B^{s_0+s_1+\cdots+s_k}_{\infty,\infty}}$ with
$s_0,s_1,\cdots,s_k\in (1-\delta,\alpha-\sigma)$ satisfying $s_0+s_1+\cdots+s_k\leq 1$,
then by arguing as \eqref{esthe-s0s1}, we infer that for any $t_{k+1}>t_k$, $s_{k+1}\in (1-\delta,\alpha-\sigma)$ satisfying $s_0+s_1+\cdots+s_{k+1}<\delta+\alpha-\sigma$,
\begin{eqnarray}\label{esthe-s0sk}
  && \sup_{t\in [t_{k+1},T]} \|\theta(t)\|_{ B^{s_0+s_1+\cdots+s_{k+1}}_{\infty,\infty}} \nonumber \\
  & \leq & C (t_{k+1}-t_k)^{-\frac{s_{k+1}}{\alpha-\sigma}}\lp ( T + 1) \big( \|\theta_0\|_{L^\infty} + \|f\|_{L^1_T L^\infty}\big) 2^{j_0(s_0+s_1+\cdots+s_{k+1})}  + \|\theta(t_k)\|_{B^{s_0+s_1+\cdots+s_k}_{\infty,\infty}}\rp + \nonumber\\
  && + \, C(t_{k+1}-t_k)^{-\frac{s_{k+1}}{\alpha-\sigma}}(T+1)\|f\|_{L^\infty_T \dot C^\delta},
\end{eqnarray}
where $j_0$ is also given by \eqref{j0-2} with $s_0+s_1$ replaced by $s_0+s_1+\cdots+s_{k+1}$. Hence if $\alpha-\sigma\in (\frac{1}{k+2},\frac{1}{k+1}]$, $k\in\N^+$,
we can choose appropriate numbers $s_0,s_1,\cdots,s_{k+1}\in (1-\delta,\alpha-\sigma)$ so that $1< s_0+s_1+ \cdots+ s_{k+1} <\delta +\alpha-\sigma$,
and by repeating the above process for $(k+1)$-times, we deduce that for $\gamma=s_0+s_1+\cdots+ s_{k+1}-1>0$,
\begin{eqnarray}\label{estheC1+}
  \sup_{t\in [t_{k+1},T]}\|\theta(t)\|_{C^{1,\gamma}}\approx \sup_{t\in [t_{k+1},T]} \|\theta(t)\|_{B^{s_0+s_1+\cdots+s_{k+1}}_{\infty,\infty}}\leq C
  \big( \|\theta_0\|_{L^\infty} + \|f\|_{L^\infty_T C^\delta}\big),
\end{eqnarray}
with $C$ a finite constant depending on $t_{k+1}, t_k,\cdots,t_0$, $s_{k+1},s_k,\cdots,s_0$, $\alpha$, $\sigma$, $\delta$, $T$, $d$ and $\|u\|_{L^\infty_T \dot C^\delta}$.

Therefore, for every $\alpha\in (0,1]$, $\sigma\in [0,\alpha)$, and for any $\tilde{t}\in (0,T)$, there is some $k\in \N$ so that $\alpha-\sigma\in (\frac{1}{k+2},\frac{1}{k+1}]$,
and we can choose $t_i= \frac{i+1}{k+2} \tilde{t}$ for $i=0,1,\cdots,k+1$ and appropriate numbers $s_0,s_1,\cdots,s_{k+1}\in (1-\delta,\alpha-\sigma)$ such that
$1<s_0+s_1+\cdots+s_{k+1}<\delta+\alpha-\sigma$, thus from \eqref{estheC1+} we conclude the key a priori estimate \eqref{target1}.

\subsection{The existence issue}\label{subsec:ext1}

We consider the following approximate system
\begin{equation}\label{appDD}
\begin{cases}
  \partial_t \theta + (u_\epsilon\cdot\nabla)\theta + \LL \theta -\epsilon \Delta \theta = f_\epsilon, \\
  u_\epsilon:= \phi_\epsilon* u, \quad f_\epsilon:= \phi_\epsilon* \lp f \,1_{B_{1/\epsilon}(0)} \rp,\\
  \theta|_{t=0}= \theta_{0,\epsilon}:=\phi_\epsilon* \lp \theta_0\, 1_{B_{1/\epsilon}(0)} \rp.
\end{cases}
\end{equation}
Here $1_{X}(x)$ is the standard indicator function on the set $X$, $\phi_\epsilon(x)=\epsilon^{-d} \phi(\frac{x}{\epsilon})$ for all $x\in \R^d$, and $\phi\in C^\infty_c(\R^d)$ is a test function supported on the ball $B_1(0)$ satisfying $\phi\equiv 1$ on $B_{1/2}(0)$ and $0\leq \phi\leq 1$.

Due to $\theta_0\in C_0(\R^d)$, we see that $\theta_{0,\epsilon}=\phi_\epsilon*\lp \theta_0 1_{B_{1/\epsilon}(0)}\rp$ is smooth defined for every $\epsilon>0$,
and $\|\theta_{0,\epsilon}\|_{H^s(\R^d)}\lesssim_\epsilon \|\theta_0\|_{L^\infty(\R^d)}$ for all $s\geq 0$. Similarly from $u\in L^\infty([0,T]; \dot C^\delta(\R^d))$ and $f\in L^\infty([0,T]; C^\delta (\R^d))$,
we get $u_\epsilon\in L^\infty([0,T];\dot C^s(\R^d))$ for all $s\geq \delta$ and $f_\epsilon\in L^\infty([0,T]; H^s(\R^d))$ for all $s\geq 0$.
Hence, for every $\epsilon>0$, by the classical method (e.g. cf. \cite[Proposition 7.1]{MX15}), we obtain an approximate solution
$\theta_\epsilon\in C([0,T]; H^s(\R^d))\cap C^\infty((0,T]\times\R^d)$, $s>\frac{d}{2}+1$ for the system \eqref{appDD}.

Since we have the following uniform-in-$\epsilon$ estimates that $\|\theta_{0,\epsilon}\|_{L^\infty}\leq \|\theta_0\|_{L^\infty}$,
$\|u_\epsilon\|_{L^\infty_T \dot C^\delta}\leq \|u\|_{L^\infty_T \dot C^\delta}$ and $\|f_\epsilon\|_{L^\infty_T C^\delta}\leq \|f\|_{L^\infty_T C^\delta}$,
we can consider the equation of $\theta_\epsilon$ and by arguing as \eqref{estheC1+} in the above subsection, we derive the uniform-in-$\epsilon$ estimate of
$\|\theta_\epsilon\|_{L^\infty((0,T]; C^{1,\gamma}(\mathbb{R}^d))}$ with some $\gamma>0$.
Such a uniform estimate guarantees that up to a subsequence, $\theta_\epsilon$ pointwisely converges to a function $\theta$ on $(0,T]\times \R^d$,
and also $\theta\in L^\infty((0,T];C^{1,\gamma}(\R^d))$ which satisfies \eqref{target1}. By passing $\epsilon$ to $0$ in \eqref{appDD}, we can see that $\theta$ is a distributional solution of \eqref{DD}.

\vskip .3in
\section{Proof of Theorem \ref{thm2}}\label{sec:thm2}
\setcounter{equation}{0}
Our main target of this section is to prove Theorem \ref{thm2}.

\subsection{Auxiliary lemmas}\label{subsec:thm2-1}

In this section we introduce some useful auxiliary lemmas.

The following lemma is concerned with the pointwise lower bound estimate of the Fourier symbol of the operator $\LL$.
\begin{lemma}\label{lem:A}
Let the diffusion operator $\LL$ be defined by \EQ{L} with the kernel function $K(y)=K(-y)$ satisfying \EQ{Kcd1}-\EQ{Kcd2},
then the associated symbol $A(\xi)$ given by \EQ{LKf} satisfies that
\begin{eqnarray}\label{Aest1}
  A(\xi)\geq  C^{-1} |\xi|^{\alpha-\sigma} - C,
\end{eqnarray}
where $\alpha\in ]0,1]$, $\sigma\in [0,\alpha[$ and $C=C(d,\alpha,\sigma)$ is a positive constant.
\end{lemma}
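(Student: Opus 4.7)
My plan is to split the integral defining $A(\xi)$ at $|x|=1$, exploit the pointwise lower bound on $K$ in the ball $|x|\le 1$ (where the integrand $1-\cos(x\cdot\xi)$ is nonnegative, so the $K$-lower bound is usable), and absorb the exterior piece into the additive constant via condition \EQ{Kcd1}. Note that because $\alpha-\sigma \le 1 < 2$ and $K(x)\lesssim |x|^{-(d+\alpha)}$ near the origin, the near-zero integrand $(1-\cos(x\cdot\xi))K(x) = O(|x|^{2-d-\alpha})$ is absolutely integrable, so the principal value is a genuine Lebesgue integral and no cancellation is required in the inner ball.

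The key computation is: for the inner piece,
\beqs
  \int_{|x|\le 1}\bigl(1-\cos(x\cdot\xi)\bigr)K(x)\,\dd x \;\ge\; c_2^{-1}\int_{|x|\le 1}\frac{1-\cos(x\cdot\xi)}{|x|^{d+\alpha-\sigma}}\,\dd x,
\eeqs
and after the rescaling $y=|\xi|\,x$ (assume $|\xi|\ge 1$), the right-hand side becomes
\beqs
  c_2^{-1}\,|\xi|^{\alpha-\sigma}\int_{|y|\le |\xi|}\frac{1-\cos(y\cdot e)}{|y|^{d+\alpha-\sigma}}\,\dd y \;\ge\; c_2^{-1}\,|\xi|^{\alpha-\sigma}\int_{|y|\le 1}\frac{1-\cos(y\cdot e)}{|y|^{d+\alpha-\sigma}}\,\dd y,
\eeqs
where $e=\xi/|\xi|$. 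The remaining integral is rotation-invariant in $e$, hence a strictly positive constant $\kappa_{d,\alpha,\sigma}>0$ independent of $\xi$; this gives the desired $|\xi|^{\alpha-\sigma}$ lower bound on the inner piece.

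For the outer piece, I would simply bound
\beqs
  \Bigl|\int_{|x|>1}\bigl(1-\cos(x\cdot\xi)\bigr)K(x)\,\dd x\Bigr| \;\le\; 2\int_{|x|>1}|K(x)|\,\dd x \;\le\; 2 c_1
\eeqs
directly from \EQ{Kcd1} (this is where condition \EQ{Kcd3} is not needed, since the outer contribution is only used as an $O(1)$ subtraction). Combining the two estimates settles the case $|\xi|\ge 1$, and the case $|\xi|\le 1$ is immediate from the trivial bound $|A(\xi)|\lesssim (1+|\xi|^2)c_1$ (again from \EQ{Kcd1}), by enlarging $C$.

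The only mild subtlety — and what I consider the main point to verify carefully — is the positivity and finiteness of $\int_{|y|\le 1}\frac{1-\cos(y\cdot e)}{|y|^{d+\alpha-\sigma}}\dd y$ uniformly in $e\in \mathbb{S}^{d-1}$: finiteness uses $\alpha-\sigma < 2$ together with $1-\cos(y\cdot e)\lesssim |y|^2$ near $0$, while strict positivity and independence of $e$ follow from rotation invariance (replace $e$ by $\mathbf{e}_1$ after an orthogonal change of variables). Everything else is routine.
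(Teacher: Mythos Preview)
Your argument is correct. The overall strategy---splitting the L\'evy--Khinchin integral at $|x|=1$, using the lower bound \EQ{Kcd2} on $K$ in the inner ball (where $1-\cos\ge 0$), and absorbing the outer piece via \EQ{Kcd1}---is exactly the paper's.

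The one genuine difference is in how you extract the power $|\xi|^{\alpha-\sigma}$ from the inner piece. The paper quotes the known identity
\[
  |\xi|^{\alpha-\sigma} \;=\; c_{d,\alpha-\sigma}\,\mathrm{p.v.}\int_{\R^d}\frac{1-\cos(y\cdot\xi)}{|y|^{d+\alpha-\sigma}}\,\dd y,
\]
writes $\int_{|y|\le 1}=\int_{\R^d}-\int_{|y|>1}$, and bounds the tail by a constant. You instead rescale $y=|\xi|\,x$ to pull out $|\xi|^{\alpha-\sigma}$ directly and then lower-bound $\int_{|y|\le|\xi|}$ by $\int_{|y|\le 1}$ for $|\xi|\ge 1$, treating $|\xi|\le 1$ separately. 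Your version is more self-contained (no external formula needed) and makes the role of rotation invariance explicit; the paper's is marginally shorter and avoids a case split in $|\xi|$. Both are equally valid, and the constants produced are of the same nature.
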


\begin{proof}[Proof of Lemma \ref{lem:A}]
  Recalling that one has (cf. Eq. (3.219) of \cite{Jacob})
\beqs
  |\xi|^\alpha = c_{d,\alpha}\;   \mathrm{p.v.}\int_{\R^d}\lp 1-\cos(y\cdot \xi)\rp \frac{1}{|y|^{d+\alpha}} \dd y,\quad \forall\alpha\in ]0,2[,
\eeqs
and by virtue of \EQ{Kcd1}-\EQ{Kcd2}, we get
\beqs
\begin{split}
  A(\xi) &  =  \,\textrm{p.v.}\int_{\R^d}\lp 1-\cos(y\cdot\xi)\rp K(y)\dd y \\
  & \geq c_2^{-1}  \int_{0<|y|\leq 1} \lp 1-\cos(y\cdot \xi)\rp \frac{1}{|y|^{d+(\alpha-\sigma)}}\dd y -  \int_{|y|\geq 1} |K(y)|\dd y \\
  & \geq c_2^{-1}  \lp  c_{d,\alpha}^{-1} |\xi|^{\alpha-\sigma}
  - \int_{|y|\geq 1}\frac{1}{|y|^{d+\alpha-\sigma}}\dd y\rp - c_1 \\
  & \geq c_2^{-1} c_{d,\alpha}^{-1} |\xi|^{\alpha-\sigma}  - C_{d,\alpha,\sigma}-c_1,
\end{split}
\eeqs
which corresponds to \EQ{Aest1}.
\end{proof}

With Lemma \ref{lem:A} in hand, we shall derive the following lower bound of some quantities involving the L\'evy-type operator $\LL$.
\begin{lemma}\label{lem:Lp1}
Let $p\geq 2$ and the symmetric kernel function $K(y)=K(-y)$ satisfy the conditions \eqref{Kcd1}-\eqref{Kcd2},
then for every $\theta\in \mathcal{S}(\R^d)$, we have
\begin{eqnarray}\label{Lp-keyes1}
  \int_{\mathbb{R}^d} |\theta(x)|^{p-2}\theta(x)\, \mathcal{L}\theta(x)\,\dd x \geq C \int_{\mathbb{R}^d} \Big(|D|^{\frac{\alpha-\sigma}{2}}|\theta(x)|^{\frac{p}{2}}\Big)^2
  \,\dd x- \widetilde{C}\int_{\mathbb{R}^d} |\theta(x)|^p\,\dd x,
\end{eqnarray}
and for every $j\in \N$,
\begin{eqnarray}\label{Lp-keyes2}
  \int_{\mathbb{R}^d} \mathcal{L}(\Delta_j\theta)\, (|\Delta_j\theta|^{p-2}\Delta_j\theta)\,\dd x
  \geq c 2^{j(\alpha-\sigma)}\|\Delta_j\theta\|_{L^p}^p- \widetilde{C} \|\Delta_j\theta\|_{L^p}^p,
\end{eqnarray}
where the constants $c,C>0$, $\widetilde{C}\geq 0$ depend only on the coefficients $p,\alpha,\sigma,d$.
\end{lemma}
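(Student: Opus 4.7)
The plan for \eqref{Lp-keyes1} is to symmetrise the bilinear form and then apply a Córdoba--Córdoba style pointwise inequality. Using $K(y)=K(-y)$ and the change of variable $x\mapsto x+y$, one writes
\[
  \int_{\R^d}|\theta|^{p-2}\theta\,\mathcal{L}\theta\,\dd x
  = \tfrac{1}{2}\iint_{\R^d\times\R^d} K(y)\bigl(|\theta(x)|^{p-2}\theta(x)-|\theta(x+y)|^{p-2}\theta(x+y)\bigr)\bigl(\theta(x)-\theta(x+y)\bigr)\,\dd x\,\dd y.
\]
For $|y|\le 1$, condition \eqref{Kcd2} gives $K(y)\ge c_2^{-1}|y|^{-d-\alpha+\sigma}>0$, while Cauchy--Schwarz applied to the antiderivative $\psi(t):=|t|^{p/2}\sign(t)$ of $\tfrac{p}{2}|t|^{p/2-1}$ yields the elementary pointwise bound (valid for all real $a,b$ and $p\ge 2$)
\[
  (a-b)\bigl(|a|^{p-2}a-|b|^{p-2}b\bigr) \ge \tfrac{4(p-1)}{p^2}\bigl(|a|^{p/2}\sign(a)-|b|^{p/2}\sign(b)\bigr)^2 \ge \tfrac{4(p-1)}{p^2}\bigl(|a|^{p/2}-|b|^{p/2}\bigr)^2.
\]
Together these bound the near-diagonal contribution from below by $c_p\iint_{|y|\le 1}|y|^{-d-\alpha+\sigma}\bigl(|\theta(x)|^{p/2}-|\theta(x+y)|^{p/2}\bigr)^2\,\dd x\,\dd y$.

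Next I estimate the tail $|y|>1$ and complete the truncated double integral. The long-range contribution is a linear combination of four terms of the form $\int_{|y|>1}K(y)\int_{\R^d}\theta(x)|\theta(x)|^{p-2}\theta(x+y)\,\dd x\,\dd y$, each bounded by Hölder's inequality and \eqref{Kcd1} by $\int_{|y|>1}|K(y)|\,\dd y\cdot\|\theta\|_{L^p}^p\lesssim\|\theta\|_{L^p}^p$. Extending the near-diagonal integral from $\{|y|\le 1\}$ to all of $\R^d$ produces a defect controlled, via $(|\theta(x)|^{p/2}-|\theta(x+y)|^{p/2})^2\le 2(|\theta(x)|^p+|\theta(x+y)|^p)$ and $\int_{|y|>1}|y|^{-d-\alpha+\sigma}\,\dd y<\infty$, by $C\|\theta\|_{L^p}^p$. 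What remains is the complete integral $\iint_{\R^{2d}}(|\theta(x)|^{p/2}-|\theta(x+y)|^{p/2})^2|y|^{-d-\alpha+\sigma}\,\dd x\,\dd y$, which equals $C_{d,\alpha,\sigma}\bigl\||D|^{(\alpha-\sigma)/2}(|\theta|^{p/2})\bigr\|_{L^2}^2$ by the classical Fourier representation of the fractional Gagliardo seminorm; combining the three contributions gives \eqref{Lp-keyes1}.

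For \eqref{Lp-keyes2}, I apply \eqref{Lp-keyes1} to $\Delta_j\theta$ in place of $\theta$, and then invoke the generalised Bernstein inequality of Chen--Miao--Zhang type (in the form established in \cite{WangZh}): for $s\in(0,1)$, $p\ge 2$ and any $f$ with Fourier support in $\{|\xi|\approx 2^j\}$, one has $\||D|^s(|f|^{p/2})\|_{L^2}^2\gtrsim 2^{2js}\|f\|_{L^p}^p$. Applied with $s=(\alpha-\sigma)/2\in(0,1/2]$ and $f=\Delta_j\theta$, this upgrades the term $C\bigl\||D|^{(\alpha-\sigma)/2}(|\Delta_j\theta|^{p/2})\bigr\|_{L^2}^2$ coming from \eqref{Lp-keyes1} into $c\,2^{j(\alpha-\sigma)}\|\Delta_j\theta\|_{L^p}^p$, which yields \eqref{Lp-keyes2}. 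The main technical obstacle lies precisely in this last step, since $|\Delta_j\theta|^{p/2}$ is \emph{not} spectrally localised when $p>2$ and the standard Bernstein inequality does not apply directly; the generalised inequality quoted above circumvents this via a heat-semigroup decomposition of $|f|^{p/2}$ combined with an $L^p$ Nash-type estimate, and is by now a standard tool in $L^p$-based analysis of dissipative equations.
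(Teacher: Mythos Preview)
Your argument is correct, but it differs from the paper's in how \eqref{Lp-keyes1} is derived. The paper proceeds via a \emph{pointwise} C\'ordoba--C\'ordoba inequality
\[
  |\theta(x)|^{\frac{p}{2}-2}\theta(x)\,\mathcal{L}\theta(x) \;\ge\; \tfrac{2}{p}\,\mathcal{L}\bigl(|\theta|^{p/2}\bigr)(x)\; -\;\text{(tail error)},
\]
obtained from Young's inequality $|\theta(x)|^{p/2-1}|\theta(y)|\le\tfrac{p-2}{p}|\theta(x)|^{p/2}+\tfrac{2}{p}|\theta(y)|^{p/2}$; after multiplying by $|\theta|^{p/2}$ and integrating, the main term becomes $\int A(\xi)\,|\widehat{|\theta|^{p/2}}(\xi)|^2\,\dd\xi$ via Plancherel, and the symbol lower bound $A(\xi)\ge C^{-1}|\xi|^{\alpha-\sigma}-C$ (their Lemma~\ref{lem:A}) converts this into the desired fractional Sobolev energy. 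You instead symmetrise the bilinear form at the outset, exploit the algebraic convexity inequality $(a-b)(|a|^{p-2}a-|b|^{p-2}b)\ge\tfrac{4(p-1)}{p^2}\bigl(|a|^{p/2}-|b|^{p/2}\bigr)^2$ on the integrand, and then recognise the resulting near-diagonal double integral as the Gagliardo seminorm of $|\theta|^{p/2}$ directly. Your route bypasses the Fourier-side Lemma~\ref{lem:A} entirely and is a bit more self-contained; the paper's route, on the other hand, isolates the L\'evy symbol estimate as a reusable lemma. For \eqref{Lp-keyes2} both arguments coincide: apply \eqref{Lp-keyes1} to $\Delta_j\theta$ and invoke the generalised Bernstein inequality (the paper cites \cite{ChenMZ} rather than \cite{WangZh}, but the content is the same).
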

\begin{proof}[Proof of Lemma \ref{lem:Lp1}]
  First we claim that the following estimate holds true
\begin{eqnarray}\label{eq:claim1}
  |\theta(x)|^{\frac{p}{2}-2}\theta(x)\,\mathcal{L}\theta(x) \geq  \frac{2}{p} (\mathcal{L}|\theta|^{\frac{p}{2}})(x) -
  2\int_{|x-y|\geq 1} \lp |\theta(x)|^{\frac{p}{2}} + |\theta(y)|^{\frac{p}{2}} \rp |K(x-y)|\,\dd y.
\end{eqnarray}
Indeed, according to the formula of $\LL$ \eqref{L} and the following estimate deduced from Young's inequality
\begin{eqnarray}\label{eq:fact}
  |\theta(x)|^{\frac{p}{2}-2}\theta(x)\theta(y)\leq|\theta(x)|^{\frac{p}{2}-1}|\theta(y)|\leq \frac{p-2}{p}|\theta(x)|^{\frac{p}{2}}+\frac{2}{p}|\theta(y)|^{\frac{p}{2}},
\end{eqnarray}
we have
\begin{eqnarray}
  |\theta(x)|^{\frac{p}{2}-2}\theta(x)\mathcal{L}\theta(x)
  &=&\textrm{p.v.}\int_{\mathbb{R}^d} \lp |\theta(x)|^{\frac{p}{2}}-|\theta(x)|^{\frac{p}{2}-2}
  \theta(x) \theta(y) \rp K(x-y)\,\dd y\nonumber\\
  &=& \textrm{p.v.}\int_{|x-y|\leq 1} \lp |\theta(x)|^{\frac{p}{2}}-|\theta(x)|^{\frac{p}{2}-2}
  \theta(x)\theta(y)\rp K(x-y)\,\dd y\nonumber\\
  &&+\int_{|x-y|\geq 1} \lp |\theta(x)|^{\frac{p}{2}}-|\theta(x)|^{\frac{p}{2}-2}\theta(x)\theta(y)\rp K(x-y)\,\dd y \\
  &\geq & \textrm{p.v.}\int_{|x-y|\leq 1} \lp |\theta(x)|^{\frac{p}{2}}-|\theta(x)|^{\frac{p}{2}-2}
  \theta(x)\theta(y)\rp K(x-y)\,\dd y\nonumber\\
  && - \frac{2p-2}{p} \int_{|x-y|\geq 1} \lp |\theta(x)|^{\frac{p}{2}}+ |\theta(y)|^{\frac{p}{2}}\rp |K(x-y)|\,\dd y.
\end{eqnarray}
Due to the positivity property of $K(y)$ on $0<|y|\leq 1$ and the inequality \eqref{eq:fact} again, we see that
\begin{eqnarray}
  &&\textrm{p.v.}\int_{|x-y|\leq 1} \lp |\theta(x)|^{\frac{p}{2}}-|\theta(x)|^{\frac{p}{2}-2}\theta(x)\theta(y)\rp K(x-y)\,\dd y\nonumber\\
  &\geq& \textrm{p.v.}\int_{|x-y|\leq 1} \left(|\theta(x)|^{\frac{p}{2}}-\Big(\frac{p-2}{p}|\theta(x)|^{\frac{p}{2}}+\frac{2}{p}|\theta(y)|^{\frac{p}{2}} \Big)\right)K(x-y)\,\dd y\nonumber\\
  &= & \frac{2}{p}\, \textrm{p.v.}\int_{|x-y|\leq 1} \left(|\theta(x)|^{\frac{p}{2}}-|\theta(y)|^{\frac{p}{2}} \right) K(x-y)\,\dd y \nonumber\\
  &=& \frac{2}{p} \,(\mathcal{L}|\theta|^{\frac{p}{2}})(x) - \frac{2}{p}\int_{|x-y|\geq1}
  \left(|\theta(x)|^{\frac{p}{2}}-|\theta(y)|^{\frac{p}{2}} \right)K(x-y)\,\dd y\nonumber \\
  & \geq & \frac{2}{p} \,(\mathcal{L}|\theta|^{\frac{p}{2}})(x) - \frac{2}{p}\int_{|x-y|\geq1}
  \left(|\theta(x)|^{\frac{p}{2}} + |\theta(y)|^{\frac{p}{2}} \right) |K(x-y)|\,\dd y .
\end{eqnarray}
Gathering the above estimates leads to \eqref{eq:claim1}.

As a consequence of \eqref{eq:claim1}, we get
\begin{eqnarray}\label{N1N2}
  \int_{\mathbb{R}^d} |\theta(x)|^{p-2}\theta(x) \mathcal{L}\theta(x)\,\dd x
  &=& \int_{\mathbb{R}^d} |\theta(x)|^{\frac{p}{2}} |\theta(x)|^{\frac{p}{2}-2}\theta(x)\mathcal{L}\theta(x)\,\dd x \nonumber\\
  &\geq& \frac{2}{p} \int_{\mathbb{R}^d} |\theta(x)|^{\frac{p}{2}}(\mathcal{L}|\theta|^{\frac{p}{2}})(x)\,\dd x\nonumber\\
  & & - 2 \int_{\mathbb{R}^d} |\theta(x)|^{\frac{p}{2}} \int_{|x-y|\geq 1} \big(|\theta(x)|^{\frac{p}{2}}+ |\theta(y)|^{\frac{p}{2}}\big) |K(x-y)|\,\dd y\dd x \nonumber\\
  &:=& N_1+ N_2.
\end{eqnarray}
In view of the Plancherel theorem and the estimate (\ref{Aest1}) concerning the symbol of $\LL$, it leads to
\begin{eqnarray*}
  N_1 &=& \frac{2}{p} \int_{\mathbb{R}^d} \widehat{|\theta|^{\frac{p}{2}}}(\xi) A(\xi) \widehat{|\theta|^{\frac{p}{2}}}(\xi)\,\dd \xi \nonumber\\
  &\geq & \frac{2}{p} C_{\alpha,\sigma,d}^{-1} \int_{\mathbb{R}^d} |\xi|^{\alpha-\sigma}\widehat{|\theta|^{\frac{p}{2}}}(\xi) \widehat{|\theta|^{\frac{p}{2}}}(\xi)\,\dd \xi
  - \frac{2}{p} C_{\alpha,\sigma,d}\int_{\mathbb{R}^d} \widehat{|\theta|^{\frac{p}{2}}}(\xi) \widehat{|\theta|^{\frac{p}{2}}}(\xi)\,\dd\xi\nonumber\\
  &=&\frac{2}{p} C_{\alpha,\sigma,d}^{-1} \int_{\mathbb{R}^d} \Big(|\xi|^{\frac{\alpha-\sigma}{2}}\widehat{|\theta|^{\frac{p}{2}}}(\xi)\Big)^2\,\dd\xi -
  \frac{2}{p} C_{\alpha,\sigma,d}\int_{\mathbb{R}^d} \widehat{|\theta|^{\frac{p}{2}}}(\xi) \widehat{|\theta|^{\frac{p}{2}}}(\xi)\,\dd\xi\nonumber\\
  &=& \frac{2}{p} C_{\alpha,\sigma,d}^{-1} \int_{\mathbb{R}^d}\Big(|D|^{\frac{\alpha-\sigma}{2}} |\theta(x)|^{\frac{p}{2}}\Big)^2 \,\dd x-
  \frac{2}{p} C_{\alpha,\sigma,d}\int_{\mathbb{R}^d} |\theta(x)|^p\,\dd x.
\end{eqnarray*}
The Young's inequality and the condition (\ref{Kcd1}) ensure that
\begin{eqnarray*}
  -\frac{N_2}{2}  &\leq& \int_{\mathbb{R}^d} |\theta(x)|^{\frac{p}{2}}\int_{|x-y|\geq 1} |\theta(x)|^{\frac{p}{2}} |K(x-y)|\,\dd y\,\dd x
  + \int_{\mathbb{R}^d} |\theta(x)|^{\frac{p}{2}}\int_{|x-y|\geq 1} |\theta(y)|^{\frac{p}{2}}\,|K(x-y)|\,\dd y\,\dd x\nonumber\\
  &\leq & \int_{\mathbb{R}^d} |\theta(x)|^p\int_{|x-y|\geq 1} |K(x-y)|\,\dd y \,\dd x
  + \|\theta\|_{L^p}^{\frac{p}{2}} \left\|\int_{\mathbb{R}^d} |\theta(y)|^{\frac{p}{2}}\,|K(x-y)|1_{\{|x-y|\geq 1\}}\,\dd y \right\|_{L^2_x} \nonumber\\
  &\leq & \|\theta\|_{L^p}^p \int_{|x|\geq 1}|K(x)|\,\dd x + \|\theta\|_{L^p}^{\frac{p}{2}} \||\theta(x)|^{\frac{p}{2}}\|_{L^2_x}  \int_{|x|\geq 1}|K(x)|\,\dd x \nonumber\\
  &\leq& 2c_1 \|\theta\|_{L^p}^p.
\end{eqnarray*}
Inserting the estimates of $N_1$ and $N_2$ into (\ref{N1N2}) yields the desired estimate (\ref{Lp-keyes1}).
Recalling the following inequality (cf. \cite{ChenMZ}) that
\begin{eqnarray*}
  \||D|^\beta(|\Delta_j \theta|^{\frac{p}{2}}) \|_{L^2}^2\geq \tilde{c} 2^{j\beta}\|\Delta_j\theta\|_{L^p}^p, \quad\textrm{for every   } \beta\in(0,2],p\in [2,\infty),j\in\N,
\end{eqnarray*}
with a constant $\tilde{c}>0$ independent of $j$, then the estimate \eqref{Lp-keyes2} is followed by combining the above lower bound with \eqref{Lp-keyes1}.
We thus conclude Lemma \ref{lem:Lp1}.
\end{proof}

Now we can show the key a priori $L^p$-estimate.
\begin{lemma}\label{lem:Lpes}
Let $u$ be a smooth vector field and $f$ be a smooth forcing term.
Assume that $\theta$ is a smooth solution for the drift-diffusion \eqref{DD}-\eqref{L} with $\theta_0\in L^p(\R^d)$ under the assumptions of $K$ \eqref{Kcd1}-\eqref{Kcd2}.
In addition, suppose that $u$ is divergence free. Then for any $T>0$, we have
\begin{eqnarray}\label{Lp-est1}
  \max_{0\leq t\leq T}\|\theta(t)\|_{L^{p}}\leq e^{C' T} \lp \|\theta_0\|_{ L^p} + \int_0^T \|f(t)\|_{L^p}\dd t\rp,
\end{eqnarray}
with $C'\geq 0$ depending only on $p,\alpha,\sigma,d$.
\end{lemma}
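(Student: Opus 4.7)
The plan is to run a standard $L^p$ energy estimate for the equation \eqref{DD}, exploiting the divergence-free condition to eliminate the transport term and using the lower bound \eqref{Lp-keyes1} from Lemma \ref{lem:Lp1} to control the contribution of the L\'evy-type diffusion.

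First I would multiply \eqref{DD} by $|\theta|^{p-2}\theta$ and integrate over $\mathbb{R}^d$ to obtain
\begin{equation*}
  \frac{1}{p}\frac{d}{dt}\|\theta(t)\|_{L^p}^p + \int_{\mathbb{R}^d}(u\cdot\nabla\theta)\,|\theta|^{p-2}\theta\,\dd x + \int_{\mathbb{R}^d}\mathcal{L}\theta\cdot |\theta|^{p-2}\theta\,\dd x = \int_{\mathbb{R}^d} f\,|\theta|^{p-2}\theta\,\dd x.
\end{equation*}
Since $u$ is divergence-free, the transport term vanishes upon writing $(u\cdot\nabla\theta)|\theta|^{p-2}\theta = \frac{1}{p}\,u\cdot\nabla|\theta|^p$ and integrating by parts (this is where the divergence-free hypothesis enters, and it is the only reason we do not pick up a $\|\nabla u\|_{L^\infty}$ term).

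Next I would apply inequality \eqref{Lp-keyes1} of Lemma \ref{lem:Lp1} to the diffusion term. Dropping the nonnegative homogeneous piece $\int (|D|^{\frac{\alpha-\sigma}{2}}|\theta|^{p/2})^2\,\dd x$, this yields the one-sided bound
\begin{equation*}
  -\int_{\mathbb{R}^d}\mathcal{L}\theta\cdot |\theta|^{p-2}\theta\,\dd x \leq \widetilde{C}\,\|\theta\|_{L^p}^p,
\end{equation*}
with $\widetilde{C}=\widetilde{C}(p,\alpha,\sigma,d)$. For the forcing term, H\"older's inequality gives $\int f\,|\theta|^{p-2}\theta\,\dd x \leq \|f\|_{L^p}\|\theta\|_{L^p}^{p-1}$. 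Combining these three ingredients,
\begin{equation*}
  \frac{1}{p}\frac{d}{dt}\|\theta(t)\|_{L^p}^p \leq \widetilde{C}\|\theta(t)\|_{L^p}^p + \|f(t)\|_{L^p}\,\|\theta(t)\|_{L^p}^{p-1}.
\end{equation*}

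Finally, dividing by $\|\theta(t)\|_{L^p}^{p-1}$ (using a standard regularization argument to justify this at zeros of $\|\theta\|_{L^p}$), one arrives at $\frac{d}{dt}\|\theta(t)\|_{L^p} \leq \widetilde{C}\|\theta(t)\|_{L^p} + \|f(t)\|_{L^p}$, and a direct application of Gr\"onwall's inequality produces the claimed bound \eqref{Lp-est1} with $C' = \widetilde{C}$. There is no substantive obstacle here: the whole point of Lemma \ref{lem:Lp1} was to supply the one-sided bound for $\int\mathcal{L}\theta\cdot|\theta|^{p-2}\theta$ that this argument needs, so once that lemma is in place the remaining work is the routine $L^p$ energy estimate together with the divergence-free cancellation.
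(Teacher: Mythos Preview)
Your proposal is correct and follows essentially the same route as the paper's proof: multiply by $|\theta|^{p-2}\theta$, use the divergence-free condition to kill the transport term, invoke \eqref{Lp-keyes1} from Lemma~\ref{lem:Lp1} to bound the diffusion term from below (discarding the nonnegative $|D|^{\frac{\alpha-\sigma}{2}}$ piece), apply H\"older to the forcing, divide through by $\|\theta\|_{L^p}^{p-1}$, and finish with Gr\"onwall. Your write-up is in fact slightly more explicit than the paper's in justifying the transport cancellation and the division step.
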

\begin{proof}[Proof of Lemma \ref{lem:Lpes}]
Multiplying both sided of (\ref{DD}) by $|\theta|^{p-2}\theta(x)$ and integrating over the spatial variable, we use the divergence-free condition of $u$ and H\"older's inequality to get
\begin{eqnarray}
  \frac{1}{p}\frac{d}{dt}\|\theta\|_{L^p}^p + \int_{\mathbb{R}^d} \mathcal{L}\theta(x)\, (|\theta|^{p-2}\theta)(x)\,\dd x
  \leq \|f\|_{L^p} \|\theta\|_{L^p}^{p-1}.\nonumber
\end{eqnarray}
Thanks to the following inequality (i.e., \eqref{Lp-keyes1})
\begin{eqnarray*}
  \int_{\mathbb{R}^d} \mathcal{L}\theta (|\theta|^{p-2}\theta)\,\dd x \geq
  C\int_{\mathbb{R}^d} \Big(|D|^{\frac{\alpha-\sigma}{2}} |\theta(x)|^{\frac{p}{2}}\Big)^2 \,\dd x- C'\int_{\mathbb{R}^d} |\theta(x)|^p\,\dd x,
\end{eqnarray*}
we obtain
\begin{eqnarray}
  \frac{1}{p}\frac{d}{dt}\|\theta\|_{L^{p}}^p + C \int_{\mathbb{R}^d} \Big(|D|^{\frac{\alpha-\sigma}{2}}|\theta(x)|^{\frac{p}{2}} \Big)^2 \,\dd x
  - C'\int_{\mathbb{R}^d} |\theta(x)|^p\,\dd x\leq \|f\|_{L^{p}}\|\theta\|_{L^p}^{p-1},\nonumber
\end{eqnarray}
which directly implies
\begin{eqnarray}
  \frac{d}{dt}\|\theta(t)\|_{L^p}- C'\|\theta(t)\|_{L^p}\leq \|f(t)\|_{L^p}.\nonumber
\end{eqnarray}
Hence Gr\"onwall's inequality guarantees the wanted inequality \eqref{Lp-est1}.
\end{proof}

\subsection{A priori estimates}

In this subsection, we assume $\theta$ is a smooth solution for the drift-diffusion equations \eqref{DD}-\eqref{L} with smooth $u$ and smooth $f$.
We shall show the estimate \eqref{target2} and the proof consists of three steps.

\textbf{Step 1:} the estimation of $\|\theta\|_{L^\infty([t_0,T];B_{p,\infty}^{s_0})}$ for any $s_0\in (0,\alpha-\sigma)$ and $t_0\in (0,T)$.

By applying the dyadic operator $\Delta_j$ ($j\in\N$) to the equation of $\theta$ in \eqref{DD}, similarly as \eqref{flEq1} and \eqref{Idec}, we get
\begin{eqnarray}\label{flEq2}
  \partial_t \Delta_j\theta+ u\cdot\nabla \Delta_j\theta +\mathcal{L}\Delta_j\theta &= &u\cdot\nabla \Delta_j\theta -\Delta_j(u\cdot\nabla \theta)+\Delta_j f \nonumber \\
  &=& I_1+I_2+I_3 + \Delta_j f,
\end{eqnarray}
where $I_1$-$I_3$ defined by \eqref{Idec} are the Bony's decomposition of the term $u\cdot\nabla \Delta_j\theta -\Delta_j(u\cdot\nabla \theta)$.
Multiplying both sides of the equation \eqref{flEq2} with $|\Delta_j \theta|^{p-2}\Delta_j \theta$ and integrating on the spatial variable over $\mathbb{R}^d$,
we use the divergence-free property of $u$ and the H\"older inequality to get
\begin{eqnarray}\label{eq:Lpest1}
  \frac{1}{p}\frac{d}{dt}\|\Delta_j\theta\|_{L^p}^p + \int_{\mathbb{R}^d} \mathcal{L}(\Delta_j\theta)(|\Delta_j\theta|^{p-2}\Delta_j\theta)\dd x
  \leq \lp \|\Delta_jf\|_{L^p} + \|I_1\|_{L^p} + \|I_2\|_{L^p} + \|I_3\|_{L^p} \rp\|\Delta_j\theta\|_{L^p}^{p-1}.
\end{eqnarray}
According to \eqref{Lp-keyes2} in Lemma \ref{lem:Lp1}, we see that
\begin{eqnarray}\label{eq:dissEs}
  \int_{\mathbb{R}^d} \mathcal{L}(\Delta_j\theta)\, (|\Delta_j\theta|^{p-2}\Delta_j\theta)\,\dd x
  \geq c 2^{j(\alpha-\sigma)}\|\Delta_j\theta\|_{L^p}^p- \widetilde{C} \|\Delta_j\theta\|_{L^p}^p,
\end{eqnarray}
where $c$ and $\widetilde{C}$ are constants depending on $p,\alpha,\sigma,d$.
Inserting \eqref{eq:dissEs} into \eqref{eq:Lpest1} and dividing $\|\Delta_j\theta\|_{L^p}^{p-1}$ lead to
\begin{eqnarray}
  \frac{d}{dt}\|\Delta_j\theta\|_{L^p} + c 2^{j(\alpha-\sigma)}\|\Delta_j \theta\|_{L^p} \leq
  \widetilde{C}\|\Delta_j\theta\|_{L^p} + \|\Delta_j f\|_{L^p} + \|I_1\|_{L^p} + \|I_2\|_{L^p} + \|I_3\|_{L^p}.
\end{eqnarray}
Similarly as deriving \eqref{I1Linf} and \eqref{I2Linf}, and using the following estimate on $\|I_3\|_{L^p}$ (from the divergence-free property of $u$):
\begin{eqnarray*}
  \|I_3\|_{L^p} &\leq& \sum_{k\geq j-2}\Big\|\nabla\cdot\Delta_j(\Delta_k u\, \widetilde\Delta_k\theta)\Big\|_{L^p}
  +  \sum_{k\geq j-2}\Big\|  \Delta_k u\cdot\nabla \widetilde\Delta_k\Delta_j\theta\Big\|_{L^p}\nonumber\\
  &\leq& C 2^j \sum_{k\geq j-2}2^j \|\Delta_k u\|_{L^\infty}\|\widetilde\Delta_k\theta\|_{L^p} \nonumber\\
  &\leq& C  \sum_{k\geq j-2}2^{k\delta} \|\Delta_k u\|_{L^\infty}2^{-k\delta}\|\widetilde\Delta_k\theta\|_{L^p} \nonumber\\
  &\leq& C \|u\|_{\dot C^{\delta}} \,2^j\Big( \sum_{k\geq j-2}2^{-k\delta}\|\Delta_k\theta\|_{L^p}\Big),
\end{eqnarray*}
we get
\begin{eqnarray*}
  \frac{d}{dt} \|\Delta_j\theta\|_{L^p}+c 2^{j(\alpha-\sigma)} \|\Delta_j\theta\|_{L^p}&\leq& \widetilde{C} \|\Delta_j \theta\|_{L^p}+
  \|\Delta_j f\|_{L^p}+C\|u\|_{\dot C^\delta} 2^{-j\delta} \sum_{k\leq j+4}2^k\| \Delta_k\theta\|_{L^p}+ \nonumber\\
  &&+\,C \|u\|_{\dot C^\delta} 2^j \sum_{k\geq j-3}2^{-k\delta}\|\Delta_k\theta\|_{L^p} .
\end{eqnarray*}
Let $j_0\in \N$ be a number chosen later (cf. \eqref{j0p}) which satisfies that $\frac{c}{2}2^{j_0(\alpha-\sigma)}\geq \widetilde{C}$, or more precisely,
\begin{equation}\label{j0-p1}
  j_0\geq \Big[\frac{1}{\alpha-\sigma}\log_2\Big( \frac{2\widetilde{C}}{c}\Big)\Big]+1,
\end{equation}
we infer that for all $j\geq j_0$,
\begin{eqnarray}\label{thejLp2}
  \frac{d}{dt} \|\Delta_j\theta\|_{L^p}+ \frac{c}{2} 2^{j(\alpha-\sigma)} \|\Delta_j\theta\|_{L^p}
  &\leq& \|\Delta_j f\|_{L^p}+C\|u\|_{\dot C^\delta} 2^{-j\delta} \sum_{k\leq j+4}2^k\| \Delta_k\theta\|_{L^p}+ \nonumber\\
  &&+\,C \|u\|_{\dot C^\delta} \,2^j \sum_{k\geq j-3}2^{-k\delta}\|\Delta_k\theta\|_{L^p} \nonumber\\
  & := & H_j^1 + H_j^2 + H_j^3.
\end{eqnarray}
Thus Gr\"onwall's inequality yields that for every $j\geq j_0$ and $t\geq 0$,
\begin{eqnarray}\label{theLp3}
  \|\Delta_j\theta(t)\|_{L^p} \leq e^{-\frac{c}{2}t 2^{j(\alpha-\sigma)}} \|\Delta_j\theta_0\|_{L^p}+ \int_0^t e^{-\frac{c}{2}(t-\tau)2^{j(\alpha-\sigma)}} \lp H_j^1(\tau) +H_j^2(\tau)+H_j^3(\tau)\rp\,\dd \tau.
\end{eqnarray}
According to Lemma \ref{lem:Lpes}, we also have the $L^p$-estimate for the considered equation \eqref{DD}:
\begin{eqnarray}\label{eq:Lpes}
  \|\theta(t)\|_{L^p}\leq e^{Ct} \lp\|\theta_0\|_{ L^p}+\int_0^t \|f(\tau)\|_{ L^p}\dd t\rp.
\end{eqnarray}
By arguing as \eqref{dataest}, we get that for all $t>0$, $j\in\N$ and $s\in(0,\alpha-\sigma)$,
\begin{eqnarray}\label{dataest2}
  2^{j s}e^{-\frac{c}{2}t2^{j(\alpha-\sigma)}} \|\Delta_j\theta_0\|_{L^p}  \leq  C_{\alpha,\sigma,s} t^{-\frac{s}{\alpha-\sigma}} \|\theta_0\|_{L^p},
\end{eqnarray}
thus collecting \eqref{theLp3}, \eqref{eq:Lpes} and \eqref{dataest2} leads to
\begin{eqnarray}\label{theBsp}
  \|\theta(t)\|_{ B^s_{p,\infty}}
  &\leq& \sup_{j\leq j_0} 2^{js}\|\Delta_j \theta(t)\|_{L^p} + \sup_{j\geq j_0}2^{js} \|\Delta_j\theta(t)\|_{L^p} \nonumber \\
  & \leq & C 2^{j_0} e^{Ct}  \lp \|\theta_0\|_{L^p}+ \|f\|_{L^1_t L^p} \rp + C_{\alpha,\sigma,s} t^{-\frac{s}{\alpha-\sigma}}\|\theta_0\|_{L^p} + \nonumber\\
  &&  + \sup_{j\geq j_0}\int_0^t e^{-\frac{c}{2}(t-\tau)2^{j(\alpha-\sigma)}} 2^{js} \lp H_j^1(\tau) + H_j^2(\tau) + H_j^3(\tau) \rp\dd \tau.
\end{eqnarray}
For the terms involving $H_j^1$, $H_j^2$ and $H_j^3$, in a similar way as obtaining \eqref{esFj1}, \eqref{esFj2} and \eqref{esFj3} respectively,
we have that for every $s\in (0,\alpha-\sigma)$ and $\delta\in(1-\alpha+\sigma,1)$,
\begin{eqnarray}\label{esHj1}
  \sup_{j\geq j_0}\int_0^t e^{-\frac{c}{2}(t-\tau)2^{j(\alpha-\sigma)}} 2^{js} H_j^1(\tau)\dd\tau
  \leq C \|f\|_{L^\infty_t\dot B^\delta_{p,\infty}} \sup_{j\geq j_0} 2^{j(s-\alpha +\sigma-\delta)} \leq C \|f\|_{L^\infty_t\dot B^\delta_{p,\infty}},
\end{eqnarray}
and
\begin{eqnarray}\label{esHj2}
  \sup_{j\geq j_0} \int_0^t e^{-\frac{c}{2}(t-\tau)2^{j(\alpha-\sigma)}} 2^{js} H_j^2(\tau)\dd\,\tau
  \leq C t^{-\frac{s}{\alpha-\sigma}}  2^{j_0(1-\alpha+\sigma-\delta)} \|u\|_{L^\infty_t \dot C^\delta}\lp \sup_{\tau\in (0,t]}\tau^{\frac{s}{\alpha-\sigma}}\|\theta(\tau)\|_{ B^s_{p,\infty}}\rp,
\end{eqnarray}
and
\begin{eqnarray}\label{esHj3}
  &&\sup_{j\geq j_0} \int_0^t e^{-\frac{c}{2}(t-\tau)2^{j(\alpha-\sigma)}} 2^{js} H_j^3(\tau)\,\dd\tau \nonumber \\
  &=& C \sup_{j\geq j_0} \int_0^t
  e^{-\frac{c}{2}(t-\tau)2^{j(\alpha-\sigma)}} \|u(\tau)\|_{\dot C^\delta} 2^{j(s+1)} \bigg(\sum_{k\geq j-3} 2^{-k\delta} \|\Delta_k \theta(\tau)\|_{L^p}\bigg) \dd\tau \nonumber\\
  &\leq &  C \|u\|_{L^\infty_t \dot C^\delta}\sup_{j\geq j_0} 2^{j(s+1)}\bigg(\sum_{k\geq j-3} 2^{-k(\delta+s)}\bigg)
  \int_0^t e^{-\frac{c}{2}(t-\tau)2^{j(\alpha-\sigma)}} \|\theta(\tau)\|_{B^s_{p,\infty}}\dd\tau \nonumber\\
  & \leq & C t^{-\frac{s}{\alpha-\sigma}}  2^{j_0(1-\alpha+\sigma-\delta)} \|u\|_{L^\infty_t \dot C^\delta}\lp \sup_{\tau\in (0,t]}\tau^{\frac{s}{\alpha-\sigma}}\|\theta(\tau)\|_{ B^s_{p,\infty}}\rp.
\end{eqnarray}
Plugging the estimates \eqref{esHj1}, \eqref{esHj2}, \eqref{esHj3} into \eqref{theBsp} yields that for any $0<s<\alpha-\sigma$ and $0<t\leq T$,
\begin{eqnarray}\label{theBsp2}
  t^{\frac{s}{\alpha-\sigma}} \|\theta(t)\|_{B_{p,\infty}^s} & \leq &
  C T^{\frac{s}{\alpha-\sigma}} e^{CT}  2^{j_0} \lp \|\theta_0\|_{L^p} + \|f\|_{L^1_T L^p}\rp + C_{\alpha,\sigma,s} \|\theta_0\|_{L^p}+
  C T^{\frac{s}{\alpha-\sigma}}\|f\|_{L^\infty_T \dot B^\delta_{p,\infty}} + \nonumber\\
  && +\, C 2^{j_0(1-\alpha+\sigma-\delta)} \|u\|_{L^\infty_T \dot C^\delta} \lp \sup_{t\in (0,T]} t^{\frac{s}{\alpha-\sigma}}\|\theta(t)\|_{ B^s_{p,\infty}}\rp.
\end{eqnarray}
Now, by choosing $j_0\in \N$ such that $C 2^{j_0(1-\alpha+\sigma-\delta)} \|u\|_{L^\infty_T \dot C^\delta}\leq \frac{1}{2}$ and \eqref{j0-p1} holds, or more precisely,
\begin{equation}\label{j0p}
  j_0 := \max\set{\Big[ \frac{1}{\delta-(1-\alpha+\sigma)}\log_2 \lp2C \|u\|_{L^\infty_T \dot C^\delta}\rp\Big], \Big[\frac{1}{\alpha-\sigma}\log_2\Big( \frac{\widetilde{C}}{c}\Big)\Big]}+1,
\end{equation}
we have that for all $0<s<\alpha-\sigma$,
\begin{eqnarray}
  \sup_{t\in (0,T]} \lp t^{\frac{s}{\alpha-\sigma}}\|\theta(t)\|_{ B^s_{p,\infty}} \rp \leq  C( T + 1)\lp  e^{CT} 2^{j_0}\big(\|\theta_0\|_{L^p} + \|f\|_{L^1_T L^p}\big) +
  \|f\|_{L^\infty_T\dot B^\delta_{p,\infty}} \rp,
\end{eqnarray}
which implies that for arbitrarily small $t_0\in (0,T)$ and every $s_0\in (0,\alpha-\sigma)$,
\begin{eqnarray}
  \sup_{t\in [t_0,T]} \|\theta(t)\|_{ B^{s_0}_{p,\infty}} \leq C t_0^{-\frac{s_0}{\alpha-\sigma}}( T + 1) \lp e^{CT} 2^{j_0}\big(\|\theta_0\|_{L^p} + \|f\|_{L^1_T L^p}\big) +
  \|f\|_{L^\infty_T\dot B^\delta_{p,\infty}} \rp,
\end{eqnarray}
where $j_0$ is given by \eqref{j0p}.

\textbf{Step 2:} the estimation of $\|\theta\|_{L^\infty([t_1,T]; B^{s_0+s_1}_{p,\infty})}$ for every $s_0,s_1\in (0,\alpha-\sigma)$ and $t_1\in (t_0,T)$.

For every $j\geq j_0$ with $j_0\in\N$ satisfying \eqref{j0-p1} chosen later, adapting the Gr\"onwall inequality to (\ref{thejLp2}) over the time interval $[t_0, t]$ (for $t>t_0>0$) yields
\begin{equation}\label{theLinf4}
  \|\Delta_j\theta(t)\|_{L^p} \leq e^{-\frac{c}{2} (t-t_0) 2^{j(\alpha-\sigma)}} \|\Delta_j\theta(t_0)\|_{L^p}
  + \int_{t_0}^t e^{-\frac{c}{2}(t-\tau)2^{j(\alpha-\sigma)}} \lp H_j^1(\tau) + H_j^2(\tau)+ H_j^3(\tau)\rp\,\dd \tau.
\end{equation}
By arguing as \eqref{dataest1}, we deduce that for $j\in \N$, $s_0\in(0,\alpha-\sigma)$ and every $s\in (0,\alpha-\sigma)$,
\begin{eqnarray}
  e^{-\frac{c}{2}(t-t_0)2^{j(\alpha-\sigma)}}2^{j (s_0 + s)} \|\Delta_{j}\theta(t_{0})\|_{L^p} \leq  C_{\alpha,\sigma,s} (t-t_0)^{-\frac{s}{\alpha-\sigma}}\|\theta(t_0)\|_{B_{p,\infty}^{s_0}},
\end{eqnarray}
thus we get that for all $t\geq t_0>0$,
\begin{eqnarray}\label{key-estp2}
  \|\theta(t)\|_{ B^{s_0+s}_{p,\infty}}
  &\leq& \sup_{j\leq j_0} 2^{j(s_0+s)}\|\Delta_j \theta(t)\|_{L^p} + \sup_{j\geq j_0}2^{j(s_0+s)} \|\Delta_j\theta(t)\|_{L^p} \nonumber \\
  & \leq & C 2^{j_0(s_0+s)} e^{Ct} \lp \|\theta_0\|_{L^p}+ \|f\|_{L^1_t L^p} \rp + C_{\alpha,\sigma,s} (t-t_0)^{-\frac{s}{\alpha-\sigma}}\|\theta(t_0)\|_{B^{s_0}_{p,\infty}} + \nonumber\\
  &&  + \sup_{j\geq j_0}\int_{t_0}^t e^{-\frac{c}{2}(t-\tau) 2^{j(\alpha-\sigma)}} 2^{j(s_0+s)} \lp H_j^1(\tau) + H_j^2(\tau) + H_j^3(\tau) \rp\dd \tau.
\end{eqnarray}
In a similar fashion of the estimating of \eqref{esFj1-2},\eqref{esFj2-2}-\eqref{esFj2-22} and \eqref{esFj3-2}, we find that for every $s\in(0,\alpha-\sigma)$ and $s_0+s<\delta+\alpha-\sigma$,
\begin{eqnarray}\label{esHj1-2}
  \sup_{j\geq j_0}\int_{t_0}^t e^{-\frac{c}{2}(t-\tau)2^{j(\alpha-\sigma)}} 2^{j(s_0+s)} H_j^1(\tau)\,\dd\tau \leq C \|f\|_{L^\infty_t \dot B^\delta_{p,\infty}},
\end{eqnarray}
and
\begin{eqnarray}\label{esHj2-2}
  && \sup_{j\geq j_0} \int_{t_0}^t e^{-\frac{c}{2}(t-\tau)2^{j(\alpha-\sigma)}} 2^{j(s_0+s)} H_j^2(\tau)\,\dd\tau \nonumber \\
  & \leq &
  \begin{cases}
  C \|u\|_{L^\infty_t \dot C^\delta}\lp \sup_{\tau\in (t_0,t]}(\tau-t_0)^{\frac{s}{\alpha-\sigma}} \|\theta\|_{ B^{s_0+s}_{p,\infty}}\rp
  (t-t_0)^{-\frac{s}{\alpha-\sigma}}  2^{j_0 \frac{1-\alpha+\sigma-\delta}{2}}, &\,\textrm{if   }0< s_0+s\leq 1\\
  C \|u\|_{L^\infty_t \dot C^\delta}\lp \sup_{\tau\in (t_0,t]}(\tau-t_0)^{\frac{s}{\alpha-\sigma}}\|\theta\|_{ B^{s_0+s}_{\infty,\infty}}\rp
  (t-t_0)^{-\frac{s}{\alpha-\sigma}}  2^{j_0(1-\alpha+\sigma-\delta)} , &\,\textrm{if  }1<s_0+s < \delta+\alpha-\sigma,
  \end{cases}
\end{eqnarray}
and for all $s\in (0,\alpha-\sigma)$,
\begin{eqnarray}\label{esHj3-2}
  &&\sup_{j\geq j_0} \int_{t_0}^t e^{-\frac{c}{2}(t-\tau)2^{j(\alpha-\sigma)}} 2^{j(s_0+s)} H_j^3(\tau)\,\dd\tau \nonumber \\
  &=& C \sup_{j\geq j_0} \int_{t_0}^t
  e^{-\frac{c}{2}(t-\tau)2^{j(\alpha-\sigma)}} \|u(\tau)\|_{\dot C^\delta} 2^{j(s_0+s+1)} \bigg(\sum_{k\geq j-3} 2^{-k\delta} \|\Delta_k \theta(\tau)\|_{L^p}\bigg) \dd\tau \nonumber\\
  &\leq &  C \|u\|_{L^\infty_t \dot C^\delta}\sup_{j\geq j_0} 2^{j(s_0+s+1)}\bigg(\sum_{k\geq j-3} 2^{-k(\delta+s_0+s)}\bigg)
  \int_{t_0}^t e^{-\frac{c}{2}(t-\tau)2^{j(\alpha-\sigma)}} \|\theta(\tau)\|_{B^{s_0+s}_{\infty,\infty}}\dd\tau \nonumber\\
  &\leq & C \|u\|_{L^\infty_t \dot C^\delta} \lp \sup_{\tau\in (t_0,t]}(\tau-t_0)^{\frac{s}{\alpha-\sigma}}\|\theta(\tau)\|_{ B^{s_0+s}_{p,\infty}}\rp
  \sup_{j\geq j_0} 2^{j(1-\delta)} \int_{t_0}^t e^{-\frac{c}{2}(t-\tau)2^{j(\alpha-\sigma)}}(\tau-t_0)^{-\frac{s}{\alpha-\sigma}}\dd\tau \nonumber \\
  & \leq & C \|u\|_{L^\infty_t \dot C^\delta}\lp \sup_{\tau\in (t_0,t]}(\tau-t_0)^{\frac{s}{\alpha-\sigma}}\|\theta(\tau)\|_{ B^{s_0+s}_{p,\infty}}\rp
  (t-t_0)^{-\frac{s}{\alpha-\sigma}}  2^{j_0(1-\alpha+\sigma-\delta)} .
\end{eqnarray}
Inserting the estimates \eqref{esHj1-2}-\eqref{esHj3-2} into \eqref{key-estp2}, we obtain that for every $t\in (t_0,T]$,
$s\in (0,\alpha-\sigma)$ and $ s_0+s < \delta+\alpha-\sigma$,
\begin{eqnarray*}
  && (t-t_0)^{\frac{s}{\alpha-\sigma}} \|\theta(t)\|_{B_{p,\infty}^{s_0+s}} \nonumber \\
  & \leq &
  C T^{\frac{s}{\alpha-\sigma}} e^{CT} \lp \|\theta_0\|_{L^p} + \|f\|_{L^1_T L^p}\rp 2^{j_0(s_0+s)} + C_{\alpha,\sigma,s} \|\theta(t_0)\|_{B^{s_0}_{p,\infty}}+
  C T^{\frac{s}{\alpha-\sigma}}\|f\|_{L^\infty_t\dot B^\delta_{p,\infty}} + \nonumber\\
  && + \,
  \begin{cases}
    C 2^{j_0\frac{1-\alpha+\sigma-\delta}{2}} \|u\|_{L^\infty_T \dot C^\delta} \lp \sup_{t\in (t_0,T]} (t-t_0)^{\frac{s}{\alpha-\sigma}}\|\theta\|_{B^{s_0+s}_{p,\infty}}\rp,
    \quad &\textrm{if}\;\; s_0+s \leq 1, \\
    C 2^{j_0(s_0+s-\alpha+\sigma-\delta)} \|u\|_{L^\infty_T \dot C^\delta}\lp \sup_{t\in (t_0,T]} (t-t_0)^{\frac{s}{\alpha-\sigma}}\|\theta\|_{B^{s_0+s}_{p,\infty}}\rp,
    \quad & \textrm{if}\;\; 1< s_0+s < \delta+\alpha-\sigma.
  \end{cases}
\end{eqnarray*}
Hence by selecting $j_0\in \N$ as
\begin{eqnarray}\label{j0-p2}
  j_0 :=
  \begin{cases}
    \;\max\set{\Big[\frac{2}{\delta-(1-\alpha+\sigma)}\log_2 \lp2C \|u\|_{L^\infty_T \dot C^\delta}\rp\Big], \Big[\frac{1}{\alpha-\sigma}\log_2\Big( \frac{2 \widetilde{C}}{c}\Big)\Big]}+1,
    \quad &\textrm{if}\;\; s_0+s \leq 1, \\
    \max\set{\Big[ \frac{1}{\delta +\alpha-\sigma-(s_0+s)}\log_2 \lp2C \|u\|_{L^\infty_T \dot C^\delta}\rp\Big], \Big[\frac{1}{\alpha-\sigma}\log_2\Big( \frac{2\widetilde{C}}{c}\Big)\Big]}+1,
    \quad & \textrm{if}\;\; 1< s_0+s < \delta+\alpha-\sigma,
  \end{cases}
\end{eqnarray}
we find that for all $s\in (0,\alpha-\sigma)$ and $ s_0+s < \delta+\alpha-\sigma$,
\begin{eqnarray*}
  && \sup_{t\in (t_0,T]} \lp (t-t_0)^{\frac{s}{\alpha-\sigma}}\|\theta(t)\|_{ B^{s_0+s}_{p,\infty}} \rp \nonumber\\
  & \leq &  C( T + 1) \lp \|\theta_0\|_{L^p}
  +\|f\|_{L^1_T L^p}\rp 2^{j_0(s_0+s)} + C\|\theta(t_0)\|_{B^{s_0}_{p,\infty}}  +  C(T+1)\|f\|_{L^\infty_T\dot B^\delta_{p,\infty}},
\end{eqnarray*}
which ensures that for any $t_1\in (t_0,T)$ and every $s_0, s_1 \in (0, \alpha-\sigma)$ satisfying $s_0+s_1<\delta+\alpha-\sigma$,
\begin{eqnarray}\label{esthe-ps0s1}
  \sup_{t\in [t_1,T]} \|\theta(t)\|_{ B^{s_0+s_1}_{p,\infty}} & \leq & C (t_1-t_0)^{-\frac{s_1}{\alpha-\sigma}}\lp ( T + 1) e^{CT} \big(\|\theta_0\|_{L^p} + \|f\|_{L^1_T L^p}\big)2^{j_0(s_0+s_1)} + \|\theta(t_0)\|_{B^{s_0}_{p,\infty}}\rp + \nonumber\\
  && + \, C(t_1-t_0)^{-\frac{s_1}{\alpha-\sigma}}(T+1)\|f\|_{L^\infty_T \dot B^\delta_{p,\infty}},
\end{eqnarray}
where $j_0$ is given by \eqref{j0-p2}.

\textbf{Step 3: }the estimation of $\|\theta\|_{L^\infty([t', T];C^{1,\gamma})}$ for some $\gamma>0$ and any $t'\in (0,T)$.

If $\alpha-\sigma \in (\frac{1}{2},1)$, we can choose appropriate indexes $s_0,s_1\in(0,\alpha-\sigma)$ so that $1<s_0+s_1<\delta+\alpha-\sigma$,
more precisely, denoting by $$\nu_1:=\min \set{\frac{2(\alpha-\sigma)-1}{2},\frac{\delta+\alpha-\sigma-1}{2}},$$
$s_0+s_1$ can be chosen so that $s_0+s_1=1+\nu_1$, thus in view of \eqref{esthe-ps0s1}, we obtain that
\begin{eqnarray}\label{eq:B1nu1}
  \sup_{t\in [t_1,T]}\|\theta(t)\|_{B^{1+\nu_1}_{p,\infty}}\leq C<\infty.
\end{eqnarray}
If $p>\frac{d}{\nu_1}$, then from the Besov embedding $B^{1+\nu_1}_{p,d}\hookrightarrow B^{1+\nu_1-\frac{d}{p}}_{\infty,\infty}$, we get the bound of $\|\theta\|_{L^\infty([t',T]; C^{1,\gamma})}$
with $t'=t_1$ and $\gamma=1+\nu_1-\frac{d}{p}>0$. If $p\leq \frac{d}{\nu_1}$, and we have the embedding $B^{1+\nu_1}_{p,\infty}\hookrightarrow L^{p_1}$ with some $p_1>\frac{d}{\nu_1}$,
by repeating the above \textbf{Step 1} and \textbf{Step 2} with $p_1$ in place of $p$, we can obtain the estimate of $\|\theta\|_{L^\infty([t_1^1, T]; B^{1+\nu_1}_{p_1,\infty})}$ with any $t_1^1\in (t_1,T)$,
which implies the bound of $\|\theta\|_{L^\infty([t_1^1,T];C^{1,\gamma})}$ with $\gamma = 1+\nu_1-\frac{d}{p_1}$.
Otherwise, for $p\leq \frac{d}{\nu_1}$ and $p_1$ satisfying $\frac{d}{p_1}=\frac{d}{p}-(1+\nu_1)$ is such that $p_1 \in(p,\frac{d}{\nu_1}]$,
as above we can obtain the bound of $\|\theta\|_{L^\infty([t_1^1, T]; B^{1+\nu_1}_{p_1,\infty})}$ with any $t_1^1\in (t_1,T)$,
then if the embedding $B^{1+\nu_1}_{p_1,\infty}\hookrightarrow L^{p_2}$ with some $p_2 >\frac{d}{\nu_1}$, we can repeat the above \textbf{Step 1} and \textbf{Step 2}
to conclude the proof, while if $p_2$ satisfying $\frac{d}{p_2}=\frac{d}{p_1}-(1+\nu_1)=\frac{d}{p}-2(1+\nu_1)$ is still such that $p_2\in (p_1, \frac{d}{\nu_1}]$,
we can iterate the above steps for several times, say $m$-times,
to find some number $p_{m+1}> \frac{d}{\nu_1}$ and obtain the bound of $\|\theta\|_{L^\infty([t^{m+1}_1,T ]; B^{1+\nu_1}_{p_{m+1},\infty})}$ with $t_1^{m+1}\in (t_1^m,T)$ any chosen,
which further implies the bound of $\|\theta\|_{L^\infty([t_1^{m+1},T];C^{1,\gamma})}$ with $\gamma = 1+\nu_1-\frac{d}{p_{m+1}}$.

For $\alpha-\sigma\in (0,\frac{1}{2}]$, we need to iterate the above procedure in \textbf{Step 2} for more times.
Assume that for some small number $t_k>0$, $k\in\N$, we already have a finite bound on $\|\theta(t_k)\|_{B^{s_0+s_1+\cdots+s_k}_{p,\infty}}$ with
$s_0,s_1,\cdots,s_k\in (0,\alpha-\sigma)$ satisfying $s_0+s_1+\cdots+s_k\leq 1$,
then by arguing as \eqref{esthe-ps0s1}, we deduce that for any $t_{k+1}>t_k$, $s_{k+1}\in (0,\alpha-\sigma)$ satisfying $s_0+s_1+\cdots+s_{k+1}<\delta+\alpha-\sigma$,
\begin{eqnarray}\label{esthe-ps0sk}
  && \sup_{t\in [t_{k+1},T]} \|\theta(t)\|_{ B^{s_0+s_1+\cdots+s_{k+1}}_{p,\infty}} \nonumber \\
  & \leq & C (t_{k+1}-t_k)^{-\frac{s_{k+1}}{\alpha-\sigma}}\lp ( T + 1) \big( \|\theta_0\|_{L^p} + \|f\|_{L^1_T L^p}\big) 2^{j_0(s_0+s_1+\cdots+s_{k+1})}  + \|\theta(t_k)\|_{B^{s_0+s_1+\cdots+s_k}_{p,\infty}}\rp + \nonumber\\
  && + \, C(t_{k+1}-t_k)^{-\frac{s_{k+1}}{\alpha-\sigma}}(T+1)\|f\|_{L^\infty_T \dot B^\delta_{p,\infty}},
\end{eqnarray}
where $j_0$ is also given by \eqref{j0-p2} with $s_0+s_1$ replaced by $s_0+s_1+\cdots+s_{k+1}$. Hence if $\alpha-\sigma\in (\frac{1}{k+2},\frac{1}{k+1}]$, $k\in\N^+$,
we can select appropriate numbers $s_0,s_1,\cdots,s_{k+1}\in (0,\alpha-\sigma)$ so that $1< s_0+s_1+ \cdots+ s_{k+1} <\delta +\alpha-\sigma$,
or, more precisely, $s_0+s_1+\cdots + s_{k+1}= 1+ \nu_{k+1}$, with
\begin{equation*}
  \nu_{k+1} := \min\set{\frac{(k+2)(\alpha-\sigma)-1}{2}, \frac{\delta+\alpha-\sigma-1}{2}},
\end{equation*}
and by repeating \textbf{Step 2} in the above manner for $(k+1)$-times, we obtain
\begin{eqnarray}\label{esthep1+}
  \sup_{t\in [t_{k+1},T]}\|\theta(t)\|_{B^{1+\nu_{k+1}}_{p,\infty}}\leq C <\infty.
\end{eqnarray}
The following deduction is similar to that stated below \eqref{eq:B1nu1}. If $p>\frac{d}{\nu_{k+1}}$, then from
$B^{1+\nu_{k+1}}_{p,\infty}\hookrightarrow B^{1+\nu_{k+1}-\frac{d}{p}}_{\infty,\infty}$, we naturally get the estimate of $\|\theta\|_{L^\infty([t_{k+1},T]; C^{1,\gamma})}$
with $\gamma= 1+\nu_{k+1}-\frac{d}{p}$. Otherwise, there exists a unique number $m\in \N$ so that
\begin{equation}\label{m-cd}
  \frac{d}{p}-m(1+\nu_{k+1})\geq \nu_{k+1},\quad\textrm{and}\quad\frac{d}{p}-(m+1)(1+\nu_{k+1})<\nu_{k+1},
\end{equation}
and by denoting $p_j\in[p,\infty)$ by
\begin{equation*}
  \frac{d}{p_j}=\frac{d}{p} - j (1+\nu_{k+1}),\quad j=0,1,2,\cdots, m,
\end{equation*}
we see that $p=p_0<p_1<\cdots<p_m\leq \frac{d}{\nu_{k+1}}$, thus by repeating the above process in obtaining \eqref{esthep1+} with $p_j$ replaced by $p_{j+1}$ iteratively ($j=0, 1,\cdots,m-1$),
we have the bound of $\|\theta\|_{L^\infty([t_{k+1}^m, T]; B^{1+\nu_{k+1}}_{p_m,\infty})}$ with any $t_{k+1}^m \in(0,T)$ (with the convention $t^0_j:=t_j$ for $j=0,1,\cdots,k+1$),
which ensures that there is some $p_{m+1}>\frac{d}{\nu_{k+1}}$ so that $\|\theta\|_{L^\infty([t_{k+1}^m, T]; L^{p_{m+1}})}$ is bounded,
and then iterating the above process once again leads to the estimate of $\|\theta\|_{L^\infty([t_{k+1}^{m+1}, T]; B^{1+\nu_{k+1}}_{p_{m+1},\infty})}$
with any $t_{k+1}^{m+1}\in(t_{k+1}^m,T)$ and moreover implies that for $\gamma =1+\nu_{k+1}-\frac{d}{p_{m+1}}$,
\begin{eqnarray}\label{esthepC1+}
  \|\theta\|_{L^\infty([t_{k+1}^{m+1},T]; C^{1,\gamma})}\approx \|\theta\|_{L^\infty([t_{k+1}^{m+1},T]; B^{1+\gamma}_{\infty,\infty})} \leq
  C \lp \|\theta_0\|_{L^p} + \|f\|_{L^\infty_T (B^\delta_{p,\infty}\cap B^\delta_{\infty,\infty})}\rp,
\end{eqnarray}
where $C>0$ is a constant depending only on $p,\alpha,\sigma,\delta,d,T$, $t_j^i$ ($i=0,1,\cdots,m+1$, $j=0,1,\cdots,k+1$) and $\|u\|_{L^\infty_T \dot C^\delta}$.

Therefore, for every $\alpha\in (0,1]$, $\sigma\in [0,\alpha)$, $p\in [2,\infty)$, and for any $t'\in (0,T)$,
there is some $k\in \N$ so that $\alpha-\sigma\in (\frac{1}{k+2},\frac{1}{k+1}]$,
and there is some number $m\in \N$ so that \eqref{m-cd} holds,
and thus we can choose $t_i^j= \frac{j(k+2)+i+1}{(k+2)(m+2)} t'$ for $i=0,1,\cdots,k+1$, $j=0,1,2,\cdots,m+1$,
and appropriate numbers $s_0,s_1,\cdots,s_{k+1}\in (1-\delta,\alpha-\sigma)$ such that
$s_0+s_1+\cdots+s_{k+1}=1+\nu_{k+1}$, then according to \eqref{esthepC1+} we iteratively prove the a priori estimate of \eqref{target2}.

\subsection{The existence issue}\label{subsec:thm2-3}

This part is similar to the deduction in the subsection \ref{subsec:ext1}. We also consider the approximate system \eqref{appDD},
and due to that $\|\theta_{0,\epsilon}\|_{H^s(\R^d)}\lesssim_\epsilon \|\theta_0\|_{L^p(\R^d)}$ for all $s\geq 0$,
we similarly obtain a smooth approximate solution $\theta_\epsilon\in C([0,T]; H^s(\R^d))\cap C^\infty((0,T]\times\R^d)$, $s>\frac{d}{2}+1$ for the system \eqref{appDD}.

Noting that we have the following uniform-in-$\epsilon$ estimates that $\|\theta_{0,\epsilon}\|_{L^p}\leq \|\theta_0\|_{L^p}$,
$\|u_\epsilon\|_{L^\infty_T \dot C^\delta}\leq \|u\|_{L^\infty_T \dot C^\delta}$ and
$\|f_\epsilon\|_{L^\infty_T (B^\delta_{p,\infty}\cap B^\delta_{\infty,\infty})}\leq \|f\|_{L^\infty_T (B^\delta_{p,\infty}\cap B^\delta_{\infty,\infty})}$,
we consider the equation of $\theta_\epsilon$ and by arguing as \eqref{esthepC1+} in the above, we can obtain the uniform-in-$\epsilon$ estimate of
$\|\theta_\epsilon\|_{L^\infty((0,T]; C^{1,\gamma}(\mathbb{R}^d))}$ with some $\gamma>0$.
Such a uniform estimate ensures that up to a subsequence, $\theta_\epsilon$ pointwisely converges to a function $\theta$ on $(0,T]\times \R^d$,
and we also have $\theta\in L^\infty((0,T];C^{1,\gamma}(\R^d))$ which satisfies \eqref{target2}. By passing $\epsilon$ to $0$ in \eqref{appDD}, we can deduce that $\theta$ is a distributional solution of \eqref{DD}.

%
\vskip 0.2in
\textbf{Acknowledgements.}
L. Xue was supported by NSFC grant No. 11401027 and Youth Scholars Program of Beijing Normal University.

\vskip .1in


\begin{thebibliography}{00} \frenchspacing
\bibitem{BCD} H. Bahouri, J.-Y. Chemin and R. Danchin, \emph{Fourier Analysis and Nonlinear Partial Differential Equations},
Grundlehren der mathematischen Wissenschaften, 343, Springer (2011).
\bibitem{CafS} L. Caffarelli and L. Silvestre, An extension problem related to the fractional Laplacian.
Comm. Partial Differential Equations \textbf{32} no. 7-9 (2007), 1245--1260.

\bibitem{ChenMZ} Q. Chen, C. Miao and Z. Zhang, A new Bernstein's inequality and the 2D dissipative quasi-geostrophic equation.
   Comm. Math. Phys., \textbf{271}(2007), 821-838.
\bibitem{CSZ} Z. Chen, R. Song and X. Zhang, Stochastic flows for L\'evy processes with H\"older drifts. ArXiv:1501.04758 [math.PR].
\bibitem{CW2008} P. Constantin and J. Wu, Regularity of H\"older continuous solutions of the supercritical quasi-geostrophic equation,
Ann. Inst. H. Poincare Anal. Non Lineaire 25 (2008), 1103--1110.
\bibitem{ChamM} D. Chamorro and S. Menozzi, Fractional operators with singular drift: smoothing properties and Morrey-Campanato spaces.
ArXiv:1412.7483v1 [math.AP].
\bibitem{CorC} A. C\'ordoba and D. C\'ordoba, A maximum principle applied to the quasi-geostrophic equations.
   Comm. Math. Phys., \textbf{249} (2004), 511--528.
\bibitem{DKSV} M. Dabkowski, A. Kiselev, L. Silvestre and V. Vicol, Global well-posedness of slightly supercritical active scalar equations.
Analysis and PDE, \textbf{7} no. 1 (2014), 43--72.
\bibitem{DL14}
H. Dong and D. Li, On a generalized maximum principle for a transport-diffusion model with log-modulated fractional dissipation. Discrete Contin. Dyn. Syst. \textbf{34} (2014), 3437--3454.
\bibitem{Hmidi} T. Hmidi, On a maximum principle and its applications to logarithmically critical Boussinesq system.
Analysis and PDE, \textbf{4} no. 2 (2011), 247--284.
\bibitem{Jacob} N. Jacob, \emph{Pseudo differential operators and Markov processes}. Vol. III: Markov processes and applications.
Imperial College Press, (2005).
\bibitem{MaeM} Y. Maekawa and H. Miura, On fundamental solutions for non-local parabolic equations with divergence free drift.
 Adv. Math., \textbf{247} (2013), 123-191.

\bibitem{MX15} C. Miao and L. Xue, On the regularity issues of a class of drift-diffusion equations with nonlocal diffusion, arXiv:1507.04806v1 [math.AP].

\bibitem{Sato} K. Sato, \emph{L\'evy processes and infinitely divisible distributions}. Cambridge studies in advanced mathematics \textbf{68}, Cambridge University Press, (1999).

\bibitem{Silv} L. Silvestre, On the differentiablity of the solution to an equation with drift and fractional diffusion.
Indiana University Mathematical Journal, \textbf{61} no. 2 (2012), 557--584.

\bibitem{SVZ} L. Silvestre, V. Vicol and A. Zlato$\check{s}$, On the loss of continuity for super-critical drift-diffusion equations.
Arch. Ration. Mech. Anal., \textbf{207} no. 3 (2013), 845-877.

\bibitem{Silv2} L. Silvestre, H\"older estimates for advection fractional-diffusion equations.
Ann. Sc. Norm. Super. Pisa Cl. Sci., \textbf{11} no. 4 (2012), 843--855.
\bibitem{WangZh} H. Wang and Z. Zhang, A frequency localized maximum principle applied to the 2D Quasi-Geostrophic equation,
Comm. Math. Phys., \textbf{301} (2011), 105--129.

\end{thebibliography}
\end{document}